\newtheorem{theorem}{Theorem}[section]
\newtheorem{lemma}[theorem]{Lemma}
\newtheorem{proposition}[theorem]{Proposition}
\newtheorem{corollary}[theorem]{Corollary}
\theoremstyle{remark}
\newtheorem{remark}[theorem]{Remark}
\numberwithin{equation}{section}
\numberwithin{figure}{section}
\newcommand{\R}{\mathbb{R}}
\newcommand{\N}{\mathbb{N}}
\DeclareMathOperator{\supp}{supp}
\DeclareMathOperator{\dist}{dist}
\DeclareMathOperator{\diam}{diam}
\newcommand{\T}{{\mathrm{T}}}
\newcommand{\one}{\mathbf{1}}
\newcommand{\abs}[1]{\left\lvert#1\right\rvert}
\newcommand{\norm}[1]{\left\lVert#1\right\rVert}
\newcommand*{\eqdef}{\coloneqq}
\DeclareMathOperator{\strain}{strain}
\DeclareMathOperator{\tr}{trace}
\renewcommand{\dist}{\mathsf{d}}
\newcommand{\kap}{\mathsf{k}}
\newcommand{\Kap}{K}
\begin{document}
	\author{Alexey Kroshnin}
 \address{
HSE University 
and Institute for Information Transmission Problems, Moscow
%
}
\email{akroshnin@hse.ru}

	\author{Eugene Stepanov}
	\address{St.Petersburg Branch of the Steklov Mathematical Institute of the Russian Academy of Sciences,
		Fontanka 27,
		191023 St.Petersburg, Russia
		\and
		HSE University, Moscow
			\and
		Department of Mathematical Physics, Faculty of Mathematics and Mechanics,
		St. Petersburg State University, St.Petersburg, Russia	
	}
	\email{stepanov.eugene@gmail.com}
	
	\author{Dario Trevisan}
	\address{Dario Trevisan, Dipartimento di Matematica, Universit\`a di Pisa \\
		Largo Bruno Pontecorvo 5 \\ I-56127, Pisa}
	\email{dario.trevisan@unipi.it}
	
	\thanks{		
	The work of the first and second authors on the paper has been carried out within the framework of the HSE University Basic Research Program.
	The results of Section~\ref{sec:stability} have been obtained under support of the RSF grant \#19-71-30020.
	The third author was partially supported by GNAMPA-INdAM 2020 project ``Problemi di ottimizzazione con vincoli via trasporto ottimo e incertezza'' and University of Pisa, Project PRA 2018-49.
	}
	\date{\today}
	
	\title{Infinite multidimensional scaling for metric measure spaces}
	
	\begin{abstract}
		For a given metric measure space $(X,\dist,\mu)$ we consider finite samples of points, calculate the matrix of distances between them and then reconstruct the points in some finite-dimensional space using the multidimensional scaling (MDS) algorithm
		with this distance matrix as an input. We show that this procedure gives a natural limit as the number of points in the samples grows to infinity and the density of points approaches the measure $\mu$. This limit can be
		viewed as ``infinite MDS'' embedding of the original space, now not anymore into a finite-dimensional space
		but rather into an infinite-dimensional Hilbert space. We further show that this embedding is stable with respect to the natural convergence of metric measure spaces. However, contrary to what is usually believed in applications, we show that
		in many cases it does not preserve distances, nor is even bi-Lipschitz, but may provide snowflake (Assouad-type) embeddings of the original space to a Hilbert space (this is, for instance, the case of a sphere and a flat torus equipped with their geodesic distances).  
	\end{abstract}
	 
	\maketitle
	
\section{Introduction}

Let $(X, \dist)$ be a metric space and suppose that, for a set of points  $\{x_1, \dots, x_n\} \subset X$ the distances $d_{ij} \eqdef \dist(x_i,x_j)$ have been calculated. A common problem in distance geometry is to reconstruct the space $X$, knowing only the information on these distances. Of course, in general, unless $X$ is finite itself one might only hope to do this in the limit as $n\to\infty$, i.e.\ with an infinitesimal error as the number of points becomes
large. One of the classical algorithms aimed to solve this problem is \textit{multidimensional scaling} (MDS), which is widely used in applications, in particular as a background tool in data science for dimension reduction (it is, for instance, a basic part of Isomap dimension reduction method \cite{tenenbaum1997mapping, tenenbaum2000global}), for visualization  and clustering  (see e.g.~\cite{borg2005modern,  wang2012geometric, chesser2020learning}). 

MDS was originally coined for intrinsically Euclidean data, i.e.\ when $X \subset  \R^d$ and the distance $\dist(x,y) = |x-y|$ is Euclidean, where it is strongly linked to Principal Component Analysis (PCA), and a rigorous proof that in this case it in fact reconstructs the distances exactly is well known~\cite{wang2012geometric}. However, most interest for applications relies on its extension to arbitrary distances or even just for general dissimilarity matrices instead of distance matrices, in which case it can be assimilated to a kernel PCA method \cite{bengio2004learning}. Quite recently, it has been shown in~\cite{adams-blumstein-kassab2020MDS} that exact reconstruction of general, not necessarily Euclidean, metric spaces, cannot be achieved by MDS: for $X = \mathbb{S}^1$ a unit radius circle endowed with its geodesic distance instead of the Euclidean one, and $\{ x_i \}_{i=1}^n$ a uniformly spaced grid, MDS yields in the limit as $n \to \infty$ a closed curve, which is very far from being a circle. Infact, a simple computation shows that it is a fractal object, namely, a \emph{snowflake} embedding \cite{tyson2005characterizations} of a circle in an infinite-dimensional Hilbert space~\cite{PuchSpokSteTrev-manif20}, i.e., it becomes an isometric embedding if $S^1$ is endowed with the geodesic distance 
 raised to some power $\alpha \in (0,1)$ (precisely, for  $\alpha=1/2$). Although this may be unexpected in view of various commonly used applications of MDS, an explanation of this fact may be also traced back to the classical work \cite{von1941fourier}, where all the invariant metrics on the circle that embed isometrically into a Hilbert space are classified, including of course the $1/2$-snowflake re-obtained via MDS, which is actually credited to the earlier work~\cite{wilson1935certain} (see also the discussion in~\cite[section 7.3]{kassab2019MDS}).

\subsection{Our contribution}
    
In this paper we show that the above mentioned example is by no means exceptional, namely, in fact MDS for Riemannian manifolds equipped with intrinsic distances quite usually reconstructs snowflake embeddings of such manifolds in an infinite-dimensional Hilbert space. Moreover, we provide natural stability results for MDS maps with respect to their data in terms of natural distances between metric measure spaces. Our results can be summarized as follows:
\begin{enumerate}[i)]
\item We consider the general MDS on metric measure spaces proposed in \cite{kassab2019MDS,adams-blumstein-kassab2020MDS}, which we often refer to as ``infinite MDS'', as opposed to the usual MDS on a finite subset, and show that, under mild assumptions satisfied on any Riemannian manifold with volume measure, the induced maps are ``almost everywhere injective'' in a natural sense (Theorem \ref{theo:d2_decomp} and Corollary \ref{co_MMinj1}), with an explicit inequality.
\item We prove general stability results for the MDS maps in terms of the Gromov-Kantorovich distance of order $4$ (Theorem~\ref{theo:GW_stability}), which in particular shows convergence and identifies the limit of the MDS maps obtained by a sequence of families of points $\{x_i\}_{i=1}^n \subset  X$, in terms of the limit of their empirical measures $\mu_n := \frac{1}{n} \sum_{i=1}^n \delta_{x_i}$, thus settling~\cite[conjecture 6.1]{adams-blumstein-kassab2020MDS}.
\item We prove that, for every $d \ge 1$, in the case of a sphere $\mathbb{S}^d$ (Proposition~\ref{prop_MDSsphere1}) and a flat torus $(\mathbb{S}^1)^d$ (Proposition~\ref{prop_MDStor1} which follows from a more general result on MDS over product spaces, Proposition~\ref{prop_MDSprod1}), endowed with their uniform measures, the MDS maps provide snowflake embeddings in infinite dimensional Hilbert spaces, extending the case of $d=1$ and answering \cite[question 7.1]{adams-blumstein-kassab2020MDS}.
\end{enumerate}
%
%
%
%
%
These results together show that  MDS on metric measure spaces behaves in several aspects quite similarly to other embedding methods used in manifold learning, e.g.\  via eigenfunctions of a Laplace operator or via a heat kernel~\cite{AmbrHonPortTewod-LaplEmbed21,BerBessGall94}, but with strikingly different features, since contrary to these embeddings, it is not even Lipschitz for quite simple manifolds  such as spheres. 

We also point out to the readers the very recent preprint~\cite{lim2022classical} dedicated to the same problem, which appeared on the arXiv around the same time as ours and provides similar results. Although the convergence results formulated there are a bit weaker than ours, one can find there some further curious examples of MDS embeddings.

\subsection{Further problems} 
We believe that our results contribute to the understanding of the structure of embeddings obtained via MDS on metric measure spaces. Still, several  important questions that we describe below remain open.
\begin{enumerate}
\item The most natural question is whether the infinite MDS map is in fact a homeomorphism onto its range, at least for a reasonably large class of metric measure spaces. This reduces to the question whether it is injective (here we are able to prove only a weaker result on ``almost everywhere injectivity''); a partial answer is discussed in Remark~\ref{rem:full-injectivity}, where it is linked to asymptotics of eigenvalues and eigenfunctions of the associated operator, which are however quite far from being easy to obtain even in very explicit examples.
\item The second question concerns the quantitative description of the limit behavior  of (finite) MDS maps in case of randomly sampled points, which is perhaps the most common assumption on the data in applications. Our results may be interpreted in this case as a sort of a qualitative law of large numbers, and more precise information on the order of convergence to infinite MDS maps is quite important to obtain.
\item The third question is to understand the implications of our results and techniques to more sophisticated algorithms, such as Isomap, the stability of which has also been subject of recent research~\cite{balasubramanian2002isomap, arias2020perturbation}, or to the recently proposed method~\cite{PuchSpokSteTrev-manif20} by the last two authors, together with N.~Puchkin and V.~Spokoiny, based on a semidefinite programming problem.
\end{enumerate}

\subsection{Structure of the paper} The paper is structured as follows. In Section~\ref{sec:notation} we recall some general notation and in Section~\ref{sec:MDS} we focus on MDS, both on a finite sample and on a generic metric measure space, showing in particular some basic properties, including continuity of the embedding map. Section~\ref{sec:inject} is dedicated to the proof of its almost everywhere injectivity, Section~\ref{sec:stability} to that of stability with respect to Gromov-Kantovorich convergence. Section~\ref{sec:mds-spheres} deals with the examples of spheres and product spaces, in particular,  flat tori.

\subsection{Acknowledgements} The authors thank two anonymous referees for many constructive comments and in particular for suggesting the use of Vitali spaces and~\cite{heinonen2015sobolev} to dispense from the doubling assumption in Remark~\ref{rem:vitali}.

\section{Notation and preliminaries}\label{sec:notation}

The metric spaces $(X, \dist)$ in the sequel will be always assumed separable and complete, and the measures $\mu$ will be finite and Borel.
For a metric space $(X, \dist)$ and a set $D\subset X$, we will denote by $\bar D$ its closure,
by $\one_D$ its characteristic function.
The notation $B_r(x)\subset X$ stands for the open ball of $X$ centered at $x\in X$ with radius $r>0$.
For a number $x\in \R$ we denote by $x^+$ its positive part, i.e.\ $x^+:=x$ for $x>0$ and $x^+:=0$ otherwise. 

The Euclidean space $\R^m$ is always assumed to be equipped with the Euclidean norm $|\cdot|$ and by $x\cdot y$ we mean the usual Euclidean scalar product of vectors $x$ and $y$. 
If $E$ is a Banach space, then we will denote by $\norm{\cdot}_E$ its norm.
We let $\ell^p$ stand for the usual Banach space of $p$-summable sequences, and $\R^\infty$ stand for the linear
space of all real valued sequences (also denoted by $\R^\N$ in some literature), equipped with its product topology which is metrizable, e.g. by the distance
\[
d(x, y) \eqdef \sum_{n = 1}^\infty 2^{-n} \frac{\abs{x_n - y_n}}{1 + \abs{x_n - y_n}}
\]
which makes it a Polish space.
We will always assume $\R^m\subset \ell^p$ and $\R^m\subset \R^\infty$, the restrictions of
$\ell^p$ and $\R^\infty$ to $\R^m$ being the restrictions to first $m$ coordinates.
If $(X,\dist)$ is a metric space with some positive $\sigma$-finite 
measure $\mu$, and $E$ is a Banach space, then $L^p(X,\mu;E)$ stands for the space of Bochner integrable with power $p$ 
 functions $f\colon X\to E$. The notation $L^p(X,\mu;\R^\infty)$ stand for the space of strongly measurable functions
$f\colon X\to \R^\infty$ such that $L^p(X,\mu;\R^m)$ for all $m\in \N$. In case $E=\R$ we will omit the reference to $E$ and write just $L^p(X,\mu)$  for the space of integrable with power $p$ 
real valued functions
(or just measurable essentially bounded in case $p=\infty$). By $\supp \mu$ we denote the support of the measure $\mu$.

We write $(\cdot,\cdot)_H$ for the scalar product in a Hilbert space $H$.
For a linear operator $T$ between Hilbert spaces we denote by $\norm{T}_{HS}$ its Hilbert-Schmidt norm, and by
$\norm{T}_{1}$ its trace class norm. If $\norm{T}_1<\infty$, then $T$ is called trace-class (or nuclear) operator.


\section{Multidimensional scaling}\label{sec:MDS}

\subsection{MDS of a finite sample}\label{sec:MDS_finite}

Given a metric space $(X, \dist)$ and a finite subset $\{ x_1, \ldots, x_n\} \subset  X$, we introduce the matrices
\begin{align}
\bar{K}_n &\eqdef \left(-\frac{1}{2 n} \dist^2(x_i, x_j)\right)_{i,j = 1,\ldots, n}, \label{eq:MDS_matrix_K}\\
\bar{P}_n &\eqdef \mathrm{Id}_n - \frac{1}{n} \one_n \one_n^\T, \label{eq:MDS_matrix_P}\\
\bar{T}_n &\eqdef \bar{P}_n \bar{K}_n \bar{P}_n \label{eq:MDS_matrix_T},
\end{align}
where $\mathrm{Id}_n$ stands for the identity $n\times n$ matrix, $\one_n$ stands for the column vector
$\one_n\eqdef (1,\ldots, 1)^\T$, $\T$ stands for the matrix transpose. 
Then the classical MDS 
consists in minimizing the strain function  defined by 
\begin{equation}\label{eq:strain_n}
\strain_n(y_1, \ldots, y_n) \eqdef \sum_{i, j = 1}^n \left((\bar{T}_n)_{i,j} - y_i\cdot y_j \right)^2
\end{equation}
among all $\{y_1, \ldots, y_n\} \subset \mathcal{H}$, where $\mathcal{H}$ is some chosen finite-dimensional Euclidean space \cite{adams-blumstein-kassab2020MDS,kassab2019MDS}. Note that one can easily rewrite
\[
\strain_n(y_1, \dots, y_n) = \norm{\bar{T}_n - G_y}_{HS}^2,
\]
where the positive symmetric $n\times n$ matrix $G_y$ is the Gram matrix of $y_1, \dots, y_n$.
Then minimization with respect to  $y_1, \dots, y_n$ is equivalent to minimization over positive-semidefinite matrices $G_y$.
In particular, one possible MDS maps from the finite set $\{x_1, \dots, x_n\}$ to $\R^n$ has the form 
\begin{equation}\label{eq:MDS_empirical}
\bar{M}_n(x_i) = \left(\sqrt{\lambda_k^+(\bar{T}_n)} [v_k^+(\bar{T}_n)]_i\right)_{k =1,\ldots,n},
\end{equation}
where $\lambda_k^+(\bar{T}_n)$ are positive eigenvalues of $\bar{T}_n$ (counting multiplicity) and $v_k^+(\bar{T}_n) \in \R^n$ are corresponding orthonormal eigenvectors.

\subsection{MDS of a metric measure space}\label{sec:MDS_metric_space}

Let $\mu$ be a Borel probability measure on $(X, \dist)$ with finite $4$th moment, i.e.
\begin{equation}\label{eq_d4mu1}
\int_{X} \dist^4(x_0, y) \,d \mu(y) < \infty
\end{equation}
for some $x_0\in X$.
Define now
\begin{align*}
\kap(x,y) &\eqdef - \frac{1}{2} 
\dist^2(x, y), \\ 
(\Kap f) (x) &\eqdef \int_X \kap(x, y) f(y) \,d \mu(y).
\end{align*}
Clearly, under condition~\eqref{eq_d4mu1} one has that $\kap \in L^2(X \times X, \mu \otimes \mu)$ and
hence the operator $\Kap \colon L^2(X,\mu)\to L^2(X,\mu)$ is a linear self-adjoint Hilbert--Schmidt (and hence compact)  
operator with
\[
\norm{\Kap}_{HS}^2= \frac 1 4 \int_{X \times X} \dist^4(x, y) \,d \mu \otimes \mu (x,y) < \infty
\]
because
\[
 \dist^2(x, y) \leq 8 \left(  \dist^2(x, x_0) +  \dist^2(x_0, y)\right). 
\]

Consider the projector operator $P\eqdef \mathrm{Id} - \one \otimes \one$
to the orthogonal complement of constant functions in $L^2(X,\mu)$,  
where $\mathrm{Id}$ stands for the identity operator in $L^2(X,\mu)$, and denote 
\[
T \eqdef P K P.
\]
Clearly $T$ is also an integral operator 
\begin{equation}\label{eq_defT1}
\begin{aligned}
(T f) (x) &= \int_X \kap_T(x, y) f(y) \,d \mu(y), \quad\mbox{where}\\
\kap_T(x,y) &\eqdef \kap(x, y) - \int_X \kap(x, y') \,d \mu(y') - \int_X \kap(x', y) \,d \mu(x') \\
& \qquad \qquad
+ \int_X \int_X \kap(x', y') \,d \mu(x') \,d \mu(y').
\end{aligned}
\end{equation}
Moreover, $T$ is also a self-adjoint Hilbert--Schmidt operator. It is immediate to see that zero is an eigenvalue
of $T$ whose eigenspace contains constant functions.


We will call in analogy with~\eqref{eq:MDS_empirical} 
the \textit{infinite MDS embedding} map from $X$ to $\R^\infty$ the map defined by
\begin{equation}\label{eq:MDS}
M(x) \eqdef \left(\sqrt{\lambda_j^+(T)} u_j^+(T)(x)\right)_{j \ge 1},
\end{equation}
where $\lambda_1^+(T) \ge \lambda_2^+(T) \ge \dots > 0$ are positive eigenvalues of $T$ (counting multiplicity), and $u_j^+(T) \in L^2(X,\mu)$ are corresponding orthonormal eigenfunctions. If the set of positive eigenvalues of $T$ contains $J< \infty$ elements, we set $(M(x))_j \eqdef 0$ for $j >J$.
In Proposition~\ref{prop_MDScorrect1} we prove that at least one eigenvalue of $T$ is positive, and thus infinite MDS embedding does not reduce to a singleton.
Note that $M$ is only defined once all the eigenfuctions $u_j^+(T)$ are chosen; a different choice of eigenfunctions gives a different map $M$.

It is conjectured in~\cite{kassab2019MDS} that this map is a limit of MDS embeddings of finite samples.
In what follows we rigorously formulate and prove this conjecture.

\subsection{Basic properties}

We start with the following easy calculation showing that eigenfunctions of $T$ are locally Lipschitz continuous.

\begin{lemma}\label{lm_MDSLip}
	Let $u \in L^2(X,\mu)$, $\norm{u}_{L^2(X,\mu)} = 1$ be an eigenfunction of $T$ with eigenvalue $\lambda \neq 0$, i.e., $T u = \lambda u$. Then for 
	any $x_0, x, x' \in X$, one has
\[
\abs{u(x) - u(x')} \le \frac{\dist(x, x')}{|\lambda|} \left[ \dist(x_0, x) + \dist(x_0, x') + 2 \left(\int_X \dist^2(x_0, y) \,d \mu(y)\right)^{1/2}\right] .
\]
In particular, if $\diam X < \infty$, then
\[
\abs{u(x) - u(x')} \le \frac{4 \diam X}{|\lambda|} \dist(x, x').
\]
\end{lemma}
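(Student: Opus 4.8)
The plan is to turn the eigenvalue relation into a genuine pointwise formula. Since $\lambda\neq 0$, I work with the everywhere-defined representative $u=\lambda^{-1}Tu$, that is
\[
u(x)=\frac{1}{\lambda}\int_X \kap_T(x,y)\,u(y)\,d\mu(y),
\]
which makes sense for \emph{every} $x$ because $\kap_T(x,\cdot)\in L^2(X,\mu)$ for each fixed $x$ (from the finite fourth moment together with $\dist^2(x,y)\le 2(\dist^2(x,x_0)+\dist^2(x_0,y))$). I then subtract the two such expressions to obtain
\[
u(x)-u(x')=\frac{1}{\lambda}\int_X\bigl(\kap_T(x,y)-\kap_T(x',y)\bigr)\,u(y)\,d\mu(y),
\]
and the whole point is to simplify the kernel difference.

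Here is the key reduction. In the four-term formula~\eqref{eq_defT1} for $\kap_T$, the summand $\int_X\kap(x',y)\,d\mu(x')$ depends only on $y$ and the final double integral is a constant, so both drop out of $\kap_T(x,\cdot)-\kap_T(x',\cdot)$; what remains is $\kap(x,y)-\kap(x',y)-\int_X\bigl(\kap(x,z)-\kap(x',z)\bigr)\,d\mu(z)$. Next I observe that $T=PKP$ places $u$ in the range of $P$, so $Pu=u$ and hence $u\perp\mathbf 1$, i.e.\ $\int_X u\,d\mu=0$. Consequently the mean-correction term integrates against $u$ to zero, and everything collapses to
\[
u(x)-u(x')=\frac{1}{\lambda}\int_X\bigl(\kap(x,y)-\kap(x',y)\bigr)\,u(y)\,d\mu(y).
\]

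From here the estimate is elementary. Using $\kap(x,y)-\kap(x',y)=-\tfrac12\bigl(\dist^2(x,y)-\dist^2(x',y)\bigr)$, the factorization $\dist^2(x,y)-\dist^2(x',y)=\bigl(\dist(x,y)-\dist(x',y)\bigr)\bigl(\dist(x,y)+\dist(x',y)\bigr)$ and the reverse triangle inequality $\abs{\dist(x,y)-\dist(x',y)}\le\dist(x,x')$ give $\abs{\kap(x,y)-\kap(x',y)}\le\tfrac12\dist(x,x')\bigl(\dist(x,y)+\dist(x',y)\bigr)$. Inserting $\dist(x,y)\le\dist(x_0,x)+\dist(x_0,y)$ and its analogue for $x'$, then applying Cauchy--Schwarz with $\norm{u}_{L^2(X,\mu)}=1$ and $\mu$ a probability measure (so $\int_X\abs{u}\,d\mu\le1$ and $\int_X\dist(x_0,y)\abs{u(y)}\,d\mu\le(\int_X\dist^2(x_0,y)\,d\mu)^{1/2}$), I obtain the first inequality. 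The second then follows by bounding each $\dist(x_0,\cdot)$ by $\diam X$, so the bracket is at most $4\diam X$.

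The only real obstacle is the bookkeeping of $\kap_T$: recognizing which of its four terms survive the difference and, above all, noticing that the orthogonality $\int_X u\,d\mu=0$ forced by $u\in\mathrm{range}\,P$ annihilates the remaining mean-correction term, reducing the problem to the bare kernel $\kap$. Incidentally, this orthogonality route yields the stated bound with room to spare (an extra factor $\tfrac12$); if one prefers not to invoke it, keeping the mean-correction term and estimating it crudely by $\int_X\abs{g}\,d\mu$ reproduces the stated constant exactly. Everything after the reduction is the $a^2-b^2$ factorization and one application of Cauchy--Schwarz.
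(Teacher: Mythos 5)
Your proof is correct, and it differs from the paper's at one genuine point. Both arguments start from the pointwise identity $u=\lambda^{-1}Tu$, reduce the kernel difference $\kap_T(x,\cdot)-\kap_T(x',\cdot)$ to $\kap(x,\cdot)-\kap(x',\cdot)$ minus the mean-correction constant $\int_X\bigl(\kap(x,z)-\kap(x',z)\bigr)\,d\mu(z)$, and finish with the $a^2-b^2$ factorization, the reverse triangle inequality, and Cauchy--Schwarz against $\norm{u}_{L^2(X,\mu)}=1$ with $\mu$ a probability measure. The divergence is in how the mean-correction term is treated. The paper keeps it and bounds it by $\int_X\abs{\kap(x,y)-\kap(x',y)}\,d\mu(y)\cdot\int_X\abs{u}\,d\mu$ with $\int_X\abs{u}\,d\mu\le 1$, so the main term gets doubled and, after $\kap=-\tfrac12\dist^2$, the factor $2\cdot\tfrac12$ yields exactly the stated constant. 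You instead note that $\lambda\neq 0$ forces $Pu=u$, hence $\int_X u\,d\mu=0$, so this constant term integrates against $u$ to zero and vanishes; that orthogonality of nontrivial eigenfunctions to constants is a structural input the paper's proof never invokes, and it buys a bound sharper by a factor $\tfrac12$, which of course implies the stated inequality (including the $4\diam X/\abs{\lambda}$ consequence, where your argument gives $2\diam X/\abs{\lambda}$). Your closing remark is also accurate: discarding the orthogonality observation and estimating the mean-correction term crudely reproduces the paper's proof essentially verbatim, so the two routes differ only in this one (worthwhile) observation.
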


\begin{proof}
One has
\begin{align*}
\abs{\lambda} \abs{u(x) - u(x')} &\le \int_X \abs{(\kap_T(x, y) - \kap_T(x', y)) u(y)} \,d \mu(y) \\
&\le \int_X \abs{\kap(x, y) - \kap(x', y)} \cdot \abs{u(y)} \,d \mu(y) \\
&\qquad 
+ \abs{\int_X \kap(x, y) \,d \mu(y) - \int_X \kap(x', y) \,d \mu(y)} \int_X \abs{u(y)} \,d \mu(y) \\
&\leq \int_X \abs{\kap(x, y) - \kap(x', y)} \cdot \abs{u(y)} \,d \mu(y)
+ \int_X \abs{\kap(x, y)  -  \kap(x', y)} \,d \mu(y)  \\
&\le 2 \left(\int_X \left(\kap(x, y) - \kap(x', y)\right)^2 \,d \mu(y)\right)^{1/2} \\
&= \left(\int_X \left(\dist^2(x, y) - \dist^2(x', y)\right)^2 \,d \mu(y)\right)^{1/2}.
\end{align*}
Furthermore,
\begin{align*}
\int_X &\left(\dist^2(x, y) - \dist^2(x', y)\right)^2 \,d \mu(y)
\le \int_X \dist^2(x, x') \left(\dist(x, y) + \dist(x', y)\right)^2 \,d \mu(y) \\
&\le \dist^2(x, x') \int_X \left(2 \dist(x_0, y) + \dist(x_0, x) + \dist(x_0, x')\right)^2 \,d \mu(y) \\
&= \dist^2(x, x') \norm{2 \dist(x_0, \cdot) + \dist(x_0, x) + \dist(x_0, x')}_{L^2(X,\mu)}^2,
\end{align*}
thus
\begin{align*}
\abs{\lambda} \abs{u(x) - u(x')} &\le \dist(x, x') \norm{2 \dist(x_0, \cdot) + \dist(x_0, x) + \dist(x_0, x')}_{L^2(X,\mu)} \\
&\le \dist(x, x')\left(\dist(x_0, x) + \dist(x_0, x') + 2 \norm{\dist(x_0, \cdot)}_{L^2(X,\mu)}\right),
\end{align*}
since $\mu(X)=1$ is a probability. The thesis follows.
\end{proof}


We now show that at least one positive eigenvalue always exists, thus the infinite MDS map $M$ is never trivial, and, moreover, 
$M$ is continuous when viewed as a map between $X$ and $\R^\infty$.

\
\begin{proposition}\label{prop_MDScorrect1}
Under condition~\eqref{eq_d4mu1} the operator
$T$ has at least one positive eigenvalue.
One has that $M\colon X\to \R^\infty$ is continuous
and, moreover, 
$M \in L^2(X, \mu; \ell^2)$ when $\sum_j \lambda_j^+(T) <+\infty$, which happens, for instance, when $T$ is a trace-class (i.e.\ nuclear) operator.  
\end{proposition}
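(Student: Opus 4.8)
The plan is to treat the three assertions separately, the only substantial one being the existence of a positive eigenvalue. Since $T = PKP$ is compact and self-adjoint, its nonzero spectrum consists of eigenvalues accumulating only at $0$, and its largest eigenvalue is given by the Rayleigh quotient
\[
\lambda_1(T) = \sup_{\substack{f \in L^2(X,\mu),\ \norm{f}_{L^2(X,\mu)} = 1}} (Tf, f)_{L^2(X,\mu)},
\]
which, when positive, lies in the nonzero spectrum and is therefore attained by an eigenfunction. Because $P$ is the orthogonal projection onto the mean-zero functions and $P = P^* = P^2$, one has $(Tf,f) = (\Kap Pf, Pf)$, so it suffices to exhibit a single mean-zero $g \in L^2(X,\mu)$ with
\[
(\Kap g, g)_{L^2(X,\mu)} = -\frac12 \int_X \int_X \dist^2(x,y)\, g(x)\, g(y)\, d\mu(x)\, d\mu(y) > 0.
\]

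To produce such a $g$, I first note that we must assume $\mu$ is not a Dirac mass (otherwise $T = 0$ and the claim fails), so that $\int_{X\times X} \dist^2\, d\mu\otimes\mu > 0$ and there exist two distinct points $a, b \in \supp \mu$ with $\dist(a,b) > 0$. I would then take $g = \mu(A)^{-1} \one_A - \mu(B)^{-1} \one_B$ for small balls $A = B_r(a)$ and $B = B_r(b)$ (both of positive measure since $a,b \in \supp\mu$); this function is mean-zero, and expanding the quadratic form gives
\[
(\Kap g, g) = -\frac12\Bigl( \langle \dist^2\rangle_{A\times A} - 2\langle \dist^2\rangle_{A \times B} + \langle \dist^2\rangle_{B\times B}\Bigr),
\]
where $\langle\,\cdot\,\rangle_{S}$ denotes the average against the product measure on the set $S$. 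As $r \to 0$ the diagonal averages satisfy $\langle \dist^2\rangle_{A\times A}, \langle \dist^2\rangle_{B\times B} \le 4r^2 \to 0$, while $\langle \dist^2 \rangle_{A\times B} \to \dist^2(a,b)$ by continuity of $\dist$; hence $(\Kap g, g) \to \dist^2(a,b) > 0$, so the form is positive for $r$ small and $\lambda_1(T) > 0$.

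The continuity of $M$ is then immediate: in $\R^\infty$ with the product topology a map is continuous if and only if each of its coordinates is, and by Lemma~\ref{lm_MDSLip} every eigenfunction $u_j^+(T)$ admits a (locally) Lipschitz, hence continuous, representative, so each coordinate $x \mapsto \sqrt{\lambda_j^+(T)}\, u_j^+(T)(x)$ is continuous. For the final claim, by Tonelli's theorem and the orthonormality $\norm{u_j^+(T)}_{L^2(X,\mu)} = 1$,
\[
\int_X \norm{M(x)}_{\ell^2}^2 \, d\mu(x) = \sum_j \lambda_j^+(T) \int_X u_j^+(T)(x)^2\, d\mu(x) = \sum_j \lambda_j^+(T),
\]
so $M \in L^2(X,\mu;\ell^2)$ as soon as $\sum_j \lambda_j^+(T) < \infty$; strong measurability follows from the separability of $\ell^2$, the finite partial sums being measurable and converging to $M$ in $\ell^2$ for $\mu$-a.e.\ $x$. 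Finally $\sum_j \lambda_j^+(T) \le \sum_j \abs{\lambda_j(T)} = \norm{T}_1$, which covers the trace-class case.

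The main obstacle is entirely in the first assertion: one must run the Rayleigh-quotient argument for a merely Hilbert--Schmidt (not necessarily trace-class) operator $T$, where the tempting shortcut ``trace of $T$ equals $\tfrac12\int_{X\times X}\dist^2\, d\mu\otimes\mu > 0$'' is unavailable, and one must carry out the limiting computation and justify the attainment of the positive supremum with care. The remaining two assertions are routine once the product-topology characterization of continuity and Tonelli's theorem are invoked.
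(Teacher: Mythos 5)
Your proof is correct, and its core — the existence of a positive eigenvalue — takes a genuinely different route from the paper's. You argue directly: since $T$ is compact and self-adjoint, $\sup_{\norm{f}_{L^2(X,\mu)}=1}(Tf,f)$ equals the maximum of the spectrum and is a positive eigenvalue whenever it is positive, and you make it positive by testing against the mean-zero function $g=\mu(A)^{-1}\one_A-\mu(B)^{-1}\one_B$ on small balls around distinct points $a,b\in\supp\mu$, for which $(Tg,g)=(\Kap g,g)\to\dist^2(a,b)>0$ as $r\to 0$. The paper instead argues by contradiction: if $T$ had no positive eigenvalue, then $-T$ would be a positive semidefinite integral operator with continuous kernel, so by Weidmann's theorem~\cite[Satz~1]{Wiedeman66} its trace would equal the diagonal integral $-\int_X\kap_T(x,x)\,d\mu(x)=-\tfrac12\int_X\int_X\dist^2(x,y)\,d\mu(x)\,d\mu(y)$, which is negative — a contradiction. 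So the ``trace shortcut'' you dismissed as unavailable is exactly what the paper uses, legitimately, because the contradiction hypothesis supplies the positivity needed to apply the trace formula to $-T$. What each approach buys: the paper's is shorter but leans on a nontrivial external result on traces of positive integral operators; yours is elementary and self-contained, needing only the spectral theorem for compact self-adjoint operators and an explicit test function. A further point in your favor: you make explicit the non-degeneracy hypothesis (that $\mu$ is not a Dirac mass, in which case $T=0$ and the claim fails), which the paper uses silently — its final strict inequality $-\tfrac12\int_X\int_X\dist^2\,d\mu\otimes\mu<0$ is false precisely for Dirac masses. The remaining two assertions in your proof coincide in substance with the paper's: continuity is coordinatewise via Lemma~\ref{lm_MDSLip}, and $M\in L^2(X,\mu;\ell^2)$ follows from the Tonelli computation $\int_X\norm{M(x)}_{\ell^2}^2\,d\mu(x)=\sum_j\lambda_j^+(T)$ together with strong measurability; the paper verifies measurability weakly, by pairing with $s\in\ell^2$, while you verify it via a.e.\ $\ell^2$-convergence of the truncated maps $M^m$, both arguments hinging on the $\mu$-a.e.\ finiteness of $\sum_j\lambda_j^+(T)(u_j^+(T))^2(x)$.
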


\begin{remark}
	In particular examples one might know the asymptotics of the eigenvalues of $T$. For instance, if $X=\mathbb{S}^d$ is the $d$-dimensional sphere equipped with its intrinsic distance $\dist$ and the surface measure $\mu$, then~\eqref{eq:lambda_bound}
	gives the power asymptotics for positive eigenvalues of $T$, which implies according to the above Proposition~\ref{prop_MDScorrect1} that $M \in L^2(X, \mu; \ell^2)$. The same holds if instead
	$X$ is just a subset of a sphere not containing any couple of antipodal points ($\dist$ and $\mu$ being the same), because in this case the kernel of $T$ is smooth and hence the eigenvalues vanish quicker than any power (and this is of course not specific to a sphere, but rather holds for $X$ subset of a smooth Riemannian manifold with diameter $\diam X$ smaller than the injectivity radius of the latter, $\dist$ the Riemannian distance and $\mu$ a volume measure).
\end{remark}

\begin{proof}
If $T$ has only negative eigenvalues, i.e.\ $-T$ is positive definite, then by~\cite[Satz~1]{Wiedeman66}  
one has
\begin{align*}
0< \sum_j (-\lambda_j) & =  \tr (-T) \eqdef - \int_X \kap_T(x, x) \,d \mu(x) \\
&= -\int_X \kap(x, x) \,d \mu(x) + \int_X \int_X \kap(x, y) \,d \mu(x) \,d \mu(y) \\
&= - \frac{1}{2} \int_X \int_X \dist^2(x, y) \,d \mu(x) \,d \mu(y) < 0,
\end{align*}
a contradiction proving the existence of at least one positive eigenvalue of $T$. 

Continuity of $M\colon X\to \R^\infty$ follows from  Lemma~\ref{lm_MDSLip}.
Moreover, 
under additional assumption that 
\[
\sum_{j=1}^\infty \lambda_j^+(T) < \infty,
\]
for every $s\in \ell^2$ one has 
the relationship
\[
(s, M(x))_{\ell^2}= \sum_j s_j \sqrt{\lambda_j^+(T)} u_j^+(T)(x),
\] 
and the series 
\[
\sum_j \lambda_j^+(T)(u_j^+(T))^2(x) 
\]
is convergent for $\mu$-a.e.\ $x\in X$, because
\[
\int_X \sum_{j=1}^m \lambda_j^+(T)(u_j^+(T))^2(x)\, d\mu(x) =\sum_{j=1}^m \lambda_j^+(T) 
<+\infty.
\]
Thus one has
\begin{align*}
\lim_{m\to \infty} \abs{\sum_{j> m} s_j \sqrt{\lambda_j^+(T)} u_j^+(T)(x)} & \leq 
\lim_{m \to \infty} \left(\sum_{j> m} s_j^2\right)^{1/2} \left(\sum_{j > m} \lambda_j^+(T)(u_j^+(T))^2(x)\right)^{1/2} \\
& = 0,
\end{align*}
and therefore, 
\[
(s,M(x))_{\ell^2}= \lim_{m \to \infty} \sum_{j =1}^m s_j \sqrt{\lambda_j^+(T)} u_j^+(T)(x)
\]
is well-defined, for $\mu$-a.e.\ $x\in X$, hence the function $s\in \ell^2 \mapsto (s, M(x))_{\ell^2}\in \R$ is measurable as a $\mu$-a.e.\ limit
of a sequence of continuous functions. This means that $M\colon X\to \ell^2$ is weakly (hence also strongly) measurable,
and finally the relationship
\[
\int_X |M(x)|^2\, d\mu(x)=\sum_j \lambda_j^+(T) 
<\infty
\]
shows that in this case
$M \in L^2(X, \mu; \ell^2)$.
\end{proof}

Calculation of the infinite MDS embedding map for a space homogeneous under the action of some group by measure preserving isometries
may be simplified with the help of the following statement.

\begin{proposition}\label{prop_MDShomogeneous}
	Let $(X, \dist, \mu)$ be homogeneous, i.e.\ there is a group $G$ acting transitively on $X$ by isometries preserving $\mu$. Then $\one$ is an eigenfunction of $K$, hence $T$ and $K$ share all eigenfunctions and eigenvalues except for the eigenvalue corresponding to $\one$.
\end{proposition}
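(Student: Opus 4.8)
The plan is to first verify that the constant function $\one$ is an eigenfunction of $K$, and then to exploit the self-adjointness of $K$ to transfer all of its remaining spectral data verbatim to $T = PKP$.

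First I would show that the function $x \mapsto \int_X \dist^2(x,y)\,d\mu(y)$ is constant on $X$. Fix $x, x' \in X$; by transitivity choose $g \in G$ with $g x = x'$. Since $g$ preserves $\mu$, the change of variables $y = g z$ gives
\[
\int_X \dist^2(x', y)\,d\mu(y) = \int_X \dist^2(g x, g z)\,d\mu(z) = \int_X \dist^2(x, z)\,d\mu(z),
\]
where the last equality uses that $g$ is an isometry. Consequently $(K\one)(x) = -\tfrac12 \int_X \dist^2(x,y)\,d\mu(y)$ does not depend on $x$, so $K\one = \lambda_0 \one$ for some $\lambda_0 \in \R$, which proves the first assertion.

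Next I would use that $K$ is self-adjoint and Hilbert--Schmidt, hence compact, so $L^2(X,\mu)$ admits an orthonormal basis of eigenfunctions of $K$ among which we may include $\one$ itself (note $\norm{\one}_{L^2(X,\mu)} = 1$ since $\mu$ is a probability). Because $\one$ is an eigenfunction of the self-adjoint operator $K$, its orthogonal complement $\one^\perp = \{f : \int_X f\,d\mu = 0\}$ is $K$-invariant. On $\one^\perp$ the projector $P = \mathrm{Id} - \one\otimes\one$ acts as the identity, while $P\one = 0$; therefore $T\one = PKP\one = 0$, and for every $f \in \one^\perp$ one has $T f = PKP f = PK f = K f$, the last step using the $K$-invariance of $\one^\perp$. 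Thus $T$ and $K$ coincide on $\one^\perp$, sharing there the whole orthonormal eigenbasis together with the corresponding eigenvalues, and differ only on the line spanned by $\one$, where the eigenvalue $\lambda_0$ of $K$ is replaced by $0$ for $T$. Selecting for $T$ the same eigenbasis of $\one^\perp$ then yields exactly the claimed conclusion.

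I do not expect a serious obstacle in this argument: the only points requiring care are the justification of the change of variables under the measure-preserving isometry $g$ (legitimate because isometries are Borel measurable and $g$ pushes $\mu$ forward to itself), and the standard fact that the orthogonal complement of an eigenvector of a self-adjoint operator is invariant. Once these are in place, the identification of the spectra of $T$ and $K$ outside the line $\R\one$ is immediate.
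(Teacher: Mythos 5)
Your proposal is correct and follows essentially the same route as the paper: the paper's proof consists exactly of your first step (the change-of-variables argument under a measure-preserving isometry $g$ showing $(K\one)(x)$ is constant), and it treats the spectral conclusion in the ``hence'' clause as immediate. Your second paragraph merely spells out that standard step (invariance of $\one^\perp$ under the self-adjoint $K$, so $T=PKP$ agrees with $K$ there and kills $\R\one$), which is a sound and welcome elaboration rather than a different method.
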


\begin{proof}
	Fix $x_0 \in X$ and take an arbitrary $x \in X$. There exists $g \in G$ such that $x_0 = g x$. Then
	\begin{align*}
	(K \one) (x) &= \int_X \kap(x, y) \,d \mu(y)
	= \int_X \kap(g x, g y) \,d \mu(y) \\
	&= \int_X \kap(x_0, y) \,d (g_\# \mu)(y)
	= \int_X \kap(x_0, y) \,d \mu(y)
	= (K \one) (x_0),
	\end{align*}
that is, $\one$ is an eigenfunction of $K$ corresponding to the eigenvalue $\lambda\eqdef (K \one) (x_0)$.
\end{proof}

\section{Injectivity almost everywhere}\label{sec:inject}

We let $\{\lambda_i\}_{i= 1}^\infty$ denote all the eigenvalues of the operator $T$ defined over $L^2(X,\mu)$ by~\eqref{eq_defT1} and by $u_i \in L^2(x,\mu)$ the eigenfunctions corresponding to $\lambda_i$, chosen so as to form an 
orthonormal basis $\{u_i\}_{i=1}^\infty$ in $L^2(X,\mu)$. 
One has 
\[
\sum_{i= 1}^\infty \lambda_i u_i \otimes u_i = \kap_T 
\]
the convergence being understood in the sense of $L^2(X\times X,\mu\otimes \mu)$, and even more, since both $\kap_T$ and all $u_i$ are continuous if $\lambda_i \neq 0$, then in fact
\[
\sum_{i = 1}^\infty \lambda_i u_i(x) u_i(\cdot) = k_T(x,\cdot) 
\]
in the sense of $L^2(X,\mu)$ for every $x\in X$.

\begin{lemma}\label{lm_MDSLebpt1}
Let $T$ be defined over $L^2(X,\mu)$ by~\eqref{eq_defT1}, and assume that $T$ is a trace-class (or nuclear) operator, i.e.
\begin{equation}\label{eq_Tnucl1}
\norm{T}_1 \eqdef \sum_i |\lambda_i| < +\infty.
\end{equation}
Define the function
\begin{equation}
f \eqdef \sum_{i = 1}^\infty \abs{\lambda_i} u_i^2 \in L^1(X,\mu) .
\end{equation}
	If $x \in X$ is a Lebesgue point of $f$, then
	\[
	\sum_{i = 1}^\infty \lambda_i u_i^2(x) = k_T(x, x) .
	\]
\end{lemma}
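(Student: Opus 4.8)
My plan is to establish this diagonal identity by a Mercer-type localization argument: I would test the kernel $\kap_T$ against normalized indicators of balls shrinking to $x$ and let the radius tend to zero. Since Lebesgue points of $f\in L^1(X,\mu)$ lie $\mu$-a.e.\ in $\supp\mu$, I may assume $\mu(B_r(x))>0$ for every $r>0$, and I would write $a_i(r) \eqdef \mu(B_r(x))^{-1}\int_{B_r(x)} u_i\, d\mu$ for the averaged eigenfunctions. Setting $\phi_r \eqdef \mu(B_r(x))^{-1}\one_{B_r(x)} \in L^2(X,\mu)$, the first step is the identity
\[
\frac{1}{\mu(B_r(x))^2}\int_{B_r(x)}\int_{B_r(x)} \kap_T(y,z)\, d\mu(y)\,d\mu(z) = (\kap_T,\phi_r\otimes\phi_r)_{L^2(X\times X)} = \sum_{i=1}^\infty \lambda_i\, a_i(r)^2,
\]
which follows at once from the $L^2(X\times X,\mu\otimes\mu)$-convergence $\kap_T = \sum_i \lambda_i\, u_i\otimes u_i$ recorded above, the continuity of the scalar product, and the facts that $\phi_r\otimes\phi_r \in L^2(X\times X)$ and $(u_i,\phi_r)_{L^2(X,\mu)} = a_i(r)$.

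Next I would pass to the limit $r\to 0$ on each side. The left-hand side is the easy one: $\kap_T$ is continuous on $X\times X$ (from its explicit formula) and $B_r(x)\times B_r(x)$ shrinks to $(x,x)$, so the double average tends to $\kap_T(x,x)$. It therefore remains to identify the limit of the spectral side and show it equals $\sum_i \lambda_i u_i^2(x)$. For each fixed $i$ with $\lambda_i\neq 0$ the eigenfunction $u_i$ is continuous by Lemma~\ref{lm_MDSLip}, so $a_i(r)\to u_i(x)$, and hence every finite partial sum satisfies $\sum_{i=1}^N \lambda_i a_i(r)^2 \to \sum_{i=1}^N \lambda_i u_i^2(x)$ as $r\to 0$ (terms with $\lambda_i=0$ do not contribute).

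The hard part, and the main obstacle, will be controlling the tail $\sum_{i>N}\lambda_i a_i(r)^2$ uniformly in $r$; this is precisely where the trace-class hypothesis and the Lebesgue-point assumption are essential. By Cauchy--Schwarz (equivalently, Jensen's inequality for the probability measure $\phi_r\,\mu$) one has $a_i(r)^2 \le \mu(B_r(x))^{-1}\int_{B_r(x)} u_i^2\, d\mu$, whence, summing the nonnegative tail and exchanging sum with integral by monotone convergence,
\[
\sum_{i>N} |\lambda_i|\, a_i(r)^2 \le \frac{1}{\mu(B_r(x))}\int_{B_r(x)} \Big(f - \sum_{i=1}^N |\lambda_i|\, u_i^2\Big)\, d\mu.
\]
Since $x$ is a Lebesgue point of $f$ and the finite sum $\sum_{i\le N}|\lambda_i|u_i^2$ is continuous at $x$, the right-hand side converges as $r\to 0$ to $f(x) - \sum_{i\le N}|\lambda_i|u_i^2(x) = \sum_{i>N}|\lambda_i|u_i^2(x)$, which is finite and tends to $0$ as $N\to\infty$. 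Thus $\limsup_{r\to 0}\big|\sum_{i>N}\lambda_i a_i(r)^2\big| \le \sum_{i>N}|\lambda_i|u_i^2(x)$ is arbitrarily small for large $N$, and combining this uniform tail control with the convergence of the finite partial sums gives $\lim_{r\to0}\sum_i\lambda_i a_i(r)^2 = \sum_i \lambda_i u_i^2(x)$; together with the limit of the left-hand side this yields the claim. The one delicate point I would flag is the interpretation of the Lebesgue value $f(x)$ as the pointwise sum $\sum_i|\lambda_i|u_i^2(x)$, which is legitimate because the partial sums increase to $f$ both pointwise and in $L^1$, so the two coincide and are finite at such $x$.
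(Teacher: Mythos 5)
Your proof is correct and follows essentially the same route as the paper's: test the spectral expansion of $\kap_T$ against normalized indicators of shrinking balls, pass to the limit in the finite partial sums using continuity of the eigenfunctions with nonzero eigenvalues, and control the tail via Jensen's inequality combined with the Lebesgue-point property of $f$. Your additional remarks (discarding the $\lambda_i=0$ terms explicitly, and identifying the Lebesgue value $f(x)$ with the pointwise sum) are minor points of rigor that the paper leaves implicit, but the argument is the same.
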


\begin{proof}
	Fix $\varepsilon > 0$. Take $n \in \N$ such that 
	\[
	\sum_{i = n}^\infty \abs{\lambda_i} u_i^2(x)< \varepsilon.
	\]
	Now notice that
	\begin{align*}
	\sum_{i = 1}^\infty \lambda_i \left(\fint_{B_r(x)} u_i \,d \mu\right)^2 
	&= \sum_{i = 1}^\infty \lambda_i \fint_{B_r(x) \times B_r(x)} u_i \otimes u_i \,d \mu \otimes \mu \\
	&= \fint_{B_r(x) \times B_r(x)} \left(\sum_{i = 1}^\infty \lambda_i u_i \otimes u_i\right) \,d \mu \otimes \mu \\
	&= \fint_{B_r(x) \times B_r(x)} \kap_T \,d \mu \otimes \mu \to \kap_T(x, x) \text{~~as~~} r \to 0.
	\end{align*}
	Furthermore, since all functions $u_i$ are continuous,
	\[
	\sum_{i = 1}^{n - 1} \lambda_i \left(\fint_{B_r(x)} u_i \,d \mu\right)^2 \to \sum_{i = 1}^{n - 1} \lambda_i u_i^2(x)  \quad \text{as } r \to 0,
	\]
	and since $x$ is a Lebesgue point of $f$,
	\begin{align*}
	\abs{\sum_{i = n}^\infty \lambda_i \left(\fint_{B_r(x)} u_i \,d \mu\right)^2}
	&\le \sum_{i = n}^\infty \abs{\lambda_i} \fint_{B_r(x)} u_i^2 \,d \mu \\
	&= \fint_{B_r(x)} f \,d \mu - \sum_{i = 1}^{n - 1} \abs{\lambda_i} \fint_{B_r(x)} u_i^2 \,d \mu \\
	&\to f(x) - \sum_{i = 1}^{n - 1} \abs{\lambda_i} u_i^2(x) = \sum_{i = n}^\infty \abs{\lambda_i} u_i^2(x) < \varepsilon \text{~~as~~} r \to 0.
	\end{align*}
	Combining the above results, we conclude.
\end{proof}

\begin{theorem}\label{theo:d2_decomp}
	Assume that~\eqref{eq_Tnucl1} holds and let $\mu$-a.e.\ point $x \in X$ be a Lebesgue point of $f$. Then
	\begin{equation}\label{eq_aeinj1}
		\sum_{i = 1}^\infty \lambda_i \left(u_i(x) - u_i(y)\right)^2 = 
\dist^2(x, y) \quad 	\text{for $\mu \otimes \mu$-a.e.\ $(x,y)$}.
	\end{equation}
\end{theorem}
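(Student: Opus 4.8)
The plan is to reduce the identity~\eqref{eq_aeinj1} to the spectral representation of the kernel $\kap_T$ together with Lemma~\ref{lm_MDSLebpt1}, and then to evaluate the resulting combination of diagonal and off-diagonal kernel values explicitly. First I would expand the square pointwise,
\[
\sum_{i=1}^\infty \lambda_i (u_i(x)-u_i(y))^2 = \sum_{i=1}^\infty \lambda_i u_i^2(x) + \sum_{i=1}^\infty \lambda_i u_i^2(y) - 2\sum_{i=1}^\infty \lambda_i u_i(x) u_i(y),
\]
and the first thing to justify is that all series converge absolutely for $\mu\otimes\mu$-a.e.\ $(x,y)$, so that this rearrangement is legitimate despite the sign-changing $\lambda_i$. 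Since $\mu$-a.e.\ point is a Lebesgue point of $f = \sum_i \abs{\lambda_i} u_i^2$, for such points one has $f(x) = \sum_i \abs{\lambda_i} u_i^2(x) < \infty$; then $\sum_i \abs{\lambda_i} \abs{u_i(x)u_i(y)} \le f(x)^{1/2} f(y)^{1/2}$ by Cauchy--Schwarz and $\sum_i \abs{\lambda_i} (u_i(x)-u_i(y))^2 \le 2(f(x)+f(y))$, both finite whenever $x$ and $y$ are Lebesgue points of $f$, i.e.\ for $\mu\otimes\mu$-a.e.\ $(x,y)$.

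The second step is to identify the three limits. For the two diagonal sums, Lemma~\ref{lm_MDSLebpt1} gives $\sum_i \lambda_i u_i^2(x) = \kap_T(x,x)$ and $\sum_i \lambda_i u_i^2(y) = \kap_T(y,y)$ at Lebesgue points of $f$. For the cross term I would use that the partial sums of $\sum_i \lambda_i u_i \otimes u_i$ converge to $\kap_T$ in $L^2(X\times X, \mu\otimes\mu)$; hence some subsequence converges $\mu\otimes\mu$-a.e.\ to $\kap_T$, and since the full series has already been shown to converge absolutely a.e.\ to its value $\sum_i \lambda_i u_i(x) u_i(y)$, the two a.e.\ limits must coincide, giving $\sum_i \lambda_i u_i(x) u_i(y) = \kap_T(x,y)$ for $\mu\otimes\mu$-a.e.\ $(x,y)$. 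Combining, the left-hand side of~\eqref{eq_aeinj1} equals $\kap_T(x,x)+\kap_T(y,y)-2\kap_T(x,y)$ for $\mu\otimes\mu$-a.e.\ $(x,y)$.

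Finally I would evaluate this combination using the explicit form~\eqref{eq_defT1}. Writing $\kap_T(x,y)=\kap(x,y)-g(x)-g(y)+c$ with $g(x)\eqdef \int_X \kap(x,y')\,d\mu(y')$ and $c$ the double integral of $\kap$, the marginal terms $g$ and the constant $c$ cancel in $\kap_T(x,x)+\kap_T(y,y)-2\kap_T(x,y)$, leaving $\kap(x,x)+\kap(y,y)-2\kap(x,y)$. Since $\kap(x,y)=-\tfrac12\dist^2(x,y)$ and $\kap(x,x)=\kap(y,y)=0$, this equals $\dist^2(x,y)$, which is the claim. The only genuinely delicate point is the a.e.\ identification of the cross term $\sum_i \lambda_i u_i(x)u_i(y)$ with $\kap_T(x,y)$, where one must pass from $L^2$-convergence of the symmetric spectral series to pointwise values; the absolute convergence established in the first step is exactly what makes the matching of the two a.e.\ limits rigorous.
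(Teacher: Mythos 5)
Your proof is correct and follows essentially the same route as the paper's: expand the square, identify the two diagonal sums via Lemma~\ref{lm_MDSLebpt1}, identify the cross term with $\kap_T(x,y)$ from the $L^2(\mu\otimes\mu)$ spectral decomposition, and observe that the marginal terms cancel in $\kap_T(x,x)+\kap_T(y,y)-2\kap_T(x,y)$, leaving $\dist^2(x,y)$. The only difference is that you make explicit the absolute-convergence bound and the subsequence argument identifying the cross term, steps the paper leaves implicit.
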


\begin{remark}\label{rem:vitali}
	The assumption on $f$ from the above theorem is fulfilled, e.g., if $\mu$ is doubling, i.e.\ there is a constant $C > 0$ such that for any $x \in X$ and $r > 0$ one has
	\[
	\mu\left(B_{2 r}(x)\right) \le C \mu\left(B_{r}(x)\right)
	\]
	(see~\cite[theorem~5.2.3]{AmbrTilli00} or  alternatively~\cite[corollary~2.9.9, theorem~2.8.17]{federer1969geometric})).
	In fact, it holds also as well on infinitesimally doubling spaces, also called Vitali spaces, see~\cite[theorem 3.4.3]{heinonen2015sobolev}, a class that includes any smooth Riemannian manifold.
\end{remark}

\begin{remark}
	It is also worth remarking that under condition~\eqref{eq_Tnucl1} one has
	\begin{equation*}\label{eq_trT2}
		\sum_{i = 1}^\infty \lambda_i = \tr (T) \eqdef \int_X \kap_T(x,x)\, d\mu(x)	= \frac 1 2 \int_X \int_X \dist^2(x,y)\, d\mu(x) d\mu(y).
	\end{equation*}
\end{remark}

\begin{proof}
	From Lemma~\ref{lm_MDSLebpt1} we get
	\begin{align*}
	\sum_{i = 1}^\infty \lambda_i \left(u_i(x) - u_i(y)\right)^2 
	&= \sum_{i = 1}^\infty \lambda_i u_i^2(x) + \sum_{i = 1}^\infty \lambda_i u_i^2(y) - 2 \sum_{i = 1}^\infty \lambda_i u_i(x) u_i(y) \\
	&= \kap_T(x, x) + \kap_T(y, y) - 2 \kap_T(x, y)
	\end{align*}
	for $\mu \otimes \mu$-a.e.\ $(x,y)$. The identity
	\[
	\kap_T(x, x) + \kap_T(y, y) - 2 \kap_T(x, y) = \kap(x, x) + \kap(y, y) - 2 \kap(x, y) = \dist^2(x, y)
	\]
	then concludes the proof.
\end{proof}

\begin{remark}\label{rm_positiveMDS}
If under conditions of Theorem~\ref{theo:d2_decomp} one has additionally that $T$ is positive semidefinite (i.e. $\lambda_i\geq 0$ for all $i\in \N$), then in view of Mercer's theorem one has that~\eqref{eq_aeinj1} holds
for all  $(x,y)\in \supp\mu\times \supp\mu$ rather than just for $\mu\otimes\mu$-a.e. $(x,y)$. In other words, $M$ is an isometric embedding of
$\supp\mu \subset X$ into $\ell^2$.
\end{remark}

Now we define a new map
\[
M^{-}(x) \eqdef \left(\sqrt{\lambda_j^- (T)} u_j^-(T)(x)\right)_{j \ge 1},
\]
where $\lambda_j^-(T)$ and $u_j^-(T)$ are the absolute values of negative eigenvalues of $T$ and the respective orthonormal eigenfunctions (with the same convention as with $M$ if these are a finite number). In the same way as $M$, one has $M^{-} \in L^2(X, \mu; \ell^2)$. Theorem~\ref{theo:d2_decomp} ensures that for $\mu \otimes \mu$-a.e.\ $(x, y)$
\begin{equation}\label{eq_MMinj1}
\norm{M(x) - M(y)}_2^2 - \norm{M^{-}(x) - M^{-}(y)}_2^2 = \dist^2(x, y).
\end{equation}

As an important corollary we have the following statement.

\begin{corollary}\label{co_MMinj1}
	Under conditions of Theorem~\ref{theo:d2_decomp} one has
\begin{equation}\label{eq_MDSinj2}
\norm{M(x) - M(y)}_2^2\geq \dist^2(x, y)
\end{equation}
for $\mu \otimes \mu$-a.e.\ $(x, y)$. 

Moreover, the map $x \mapsto N(x) \eqdef (M(x), M^{-}(x)) \in \mathcal{K}$ considered as a map between $X$ and the Krein space 
$\mathcal{K} \eqdef \ell^2 \times \ell^2$
with the indefinite inner product
\[
\left( (f_1, g_1), (f_2, g_2) \right)_{\mathcal{K}} \eqdef \left( f_1, f_2 \right) - \left( g_1, g_2 \right)
\]
and the corresponding pseudo-norm
\[
\norm{(f, g)}_{\mathcal{K}}^2 \eqdef \norm{f}_2^2 - \norm{g}_2^2 ,
\]
is an a.e.\ isometry
in the sense that
\begin{equation}\label{eq_MMinj2}
\norm{N(x) - N(y)}_{\mathcal{K}}^2 = \dist^2(x, y)
\end{equation}
for $\mu \otimes \mu$-a.e.\ $(x, y)\in X\times X$.
\end{corollary}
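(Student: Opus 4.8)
The plan is to derive both assertions from the single identity~\eqref{eq_MMinj1}, which I would in turn read off from Theorem~\ref{theo:d2_decomp} by sorting the eigenvalues of $T$ according to their sign. Concretely, I would split the $\mu\otimes\mu$-a.e.\ convergent series
\[
\sum_{i=1}^\infty \lambda_i\bigl(u_i(x)-u_i(y)\bigr)^2 = \dist^2(x,y)
\]
into its positive-eigenvalue part and its negative-eigenvalue part, the zero eigenvalues contributing nothing. By the very definitions of $M$ and of $M^{-}$ (recall that $\lambda_j^{-}(T)$ denotes the modulus of a negative eigenvalue), the positive part is exactly $\norm{M(x)-M(y)}_2^2 = \sum_{\lambda_i>0}\lambda_i(u_i(x)-u_i(y))^2$, while the negative part is $\norm{M^{-}(x)-M^{-}(y)}_2^2 = \sum_{\lambda_i<0}\abs{\lambda_i}(u_i(x)-u_i(y))^2$. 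Subtracting the second from the first reproduces the full series and hence gives~\eqref{eq_MMinj1}.

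The only step demanding any care --- and thus the main (if modest) obstacle --- is the legitimacy of splitting one convergent series into the difference of its positive- and negative-eigenvalue subseries. For this I need each subseries to be finite for $\mu\otimes\mu$-a.e.\ $(x,y)$. This is precisely the statement that $\norm{M(x)-M(y)}_2$ and $\norm{M^{-}(x)-M^{-}(y)}_2$ are finite almost everywhere, which I would justify as follows: condition~\eqref{eq_Tnucl1} gives $\sum_j \lambda_j^{+}(T)<\infty$ and $\sum_j \lambda_j^{-}(T)<\infty$, so by Proposition~\ref{prop_MDScorrect1} both $M$ and $M^{-}$ belong to $L^2(X,\mu;\ell^2)$; in particular $M(x),M^{-}(x)\in\ell^2$ for $\mu$-a.e.\ $x$, whence all four vectors $M(x),M(y),M^{-}(x),M^{-}(y)$ lie in $\ell^2$ for $\mu\otimes\mu$-a.e.\ $(x,y)$ and the two subseries converge. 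Once both converge, the full series converges absolutely, the rearrangement is valid, and~\eqref{eq_MMinj1} holds $\mu\otimes\mu$-a.e.

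With~\eqref{eq_MMinj1} in hand the two conclusions are immediate and require no further work. For the inequality~\eqref{eq_MDSinj2} I would simply discard the nonnegative term $\norm{M^{-}(x)-M^{-}(y)}_2^2$, obtaining $\norm{M(x)-M(y)}_2^2 = \dist^2(x,y) + \norm{M^{-}(x)-M^{-}(y)}_2^2 \ge \dist^2(x,y)$ for $\mu\otimes\mu$-a.e.\ $(x,y)$. For the Krein-space isometry~\eqref{eq_MMinj2} I would merely unfold the definition of the pseudo-norm on $\mathcal{K}=\ell^2\times\ell^2$: since $N(x)-N(y)=\bigl(M(x)-M(y),\,M^{-}(x)-M^{-}(y)\bigr)$, one has $\norm{N(x)-N(y)}_{\mathcal{K}}^2 = \norm{M(x)-M(y)}_2^2 - \norm{M^{-}(x)-M^{-}(y)}_2^2$, which by~\eqref{eq_MMinj1} equals $\dist^2(x,y)$ for $\mu\otimes\mu$-a.e.\ $(x,y)$. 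This completes the plan; essentially all the mathematical content sits in Theorem~\ref{theo:d2_decomp}, and the corollary is a bookkeeping repackaging of it into the positive/negative (Krein) splitting.
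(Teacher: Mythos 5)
Your proposal is correct and follows essentially the same route as the paper: the paper derives the identity~\eqref{eq_MMinj1} from Theorem~\ref{theo:d2_decomp} in the discussion immediately preceding the corollary (noting, as you do, that $M, M^{-} \in L^2(X,\mu;\ell^2)$ so the positive and negative subseries converge a.e.), and then reads off both~\eqref{eq_MDSinj2} and~\eqref{eq_MMinj2} exactly as you describe. Your extra care about the legitimacy of splitting the series into its positive and negative parts is a sound (and slightly more explicit) justification of a step the paper treats as immediate.
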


\begin{remark}\label{rem:full-injectivity}
Under conditions of Theorem~\ref{theo:d2_decomp} and hence also of Corollary~\ref{co_MMinj1} one has
$M\in L^2(X,\mu;\ell^2)$. If one knows additionally that 
\begin{equation}\label{eq_MDSunif1}
\lim_m \sup_{x\in X} \sum_{j\geq m}\lambda_j^+(T) (u_j^+(T))^2(x) =0,
\end{equation}
then more can be said, namely,~\eqref{eq_MDSinj2} holds for all 
$(x, y)\in X\times X$ rather than just for $\mu\otimes\mu$-a.e., i.e.\ in particular $M$ is injective. 
When $X$ is compact, this also implies that $M\colon X\to \R^\infty$ is a homeomorphism onto its image
since it is a continuous map.
In fact, by~\eqref{eq_MDSunif1} for every $\varepsilon>0$ there is an $m\in \N$ such that
\[
\norm{M(x) - M(y)}_2^2\leq |M^m(x) - M^m(y)|^2 +\varepsilon
\]
for all 
$(x, y)\in X\times X$,
where $M^m\colon X\to \R^m$ stands for the restriction of $M$ to its first $m$ components, because
\begin{align*}
0 \leq \norm{M(x) - M(y)}_2^2 & - |M^m(x) - M^m(y)|^2  \\ 
& = \sum_{j\geq m}\lambda_j^+(T) (u_j^+(T))^2(x) + \sum_{j\geq m}\lambda_j^+(T) (u_j^+(T))^2(y) \\
&\qquad\qquad- 2\sum_{j\geq m}\lambda_j^+(T) (u_j^+(T))(x) (u_j^+(T))(y) \\
& \leq 2 \sum_{j\geq m}\lambda_j^+(T) (u_j^+(T))^2(x) + 2 \sum_{j\geq m}\lambda_j^+(T) (u_j^+(T))^2(y).
\end{align*}
Thus by~\eqref{eq_MDSinj2} one would have
\[ 
\dist^2(x,y)\leq |M^m(x) - M^m(y)|^2 +\varepsilon
\]
for $\mu \otimes \mu$-a.e.\ $(x, y)\in X\times X$, and minding that $M^m$ is continuous by Lemma~\ref{lm_MDSLip}, in fact, also for
all 
$(x, y)\in X\times X$. Letting then $\varepsilon\to 0$ we get the desired claim.

The condition~\eqref{eq_MDSunif1} is satisfied, for instance, when one has 
\begin{equation*}\label{eq_MDSinj3}
\norm{u_j^+(T)}_{L^\infty(X,\mu)} \leq C\lambda_j^{-\alpha} \quad \text{and} \quad \sum_{j=1}^\infty (\lambda_j^+(T))^{1-2\alpha} <+\infty
\end{equation*} 
for some $C>0$ and $\alpha\in [0,1/2)$. 
Another interesting particular case is when the range of $M$ is some $\R^m$, i.e.\ $M = (M^m, 0, \dots)$, or the operator $T$ has only $m \in \N$ positive eigenvalues.
Combining~\eqref{eq_MDSinj2} with Lemma~\ref{lm_MDSLip} one has that in the latter case
\[
\dist(x,y)\leq |M(x) - M(y)| \leq C \dist(x,y)
\]
for all 
$(x, y)\in X\times X$, i.e.\ in other words $M$ provides a bi-Lipschitz embedding of $X$ into $\R^m$.
\end{remark}

%

Let us finally mention that Corollary~\ref{co_MMinj1} actually holds for any continuous symmetric kernel $\kap$, not necessarily the negative squared distance, with $\dist^2(\cdot, \cdot)$ in all assertions replaced with $- 2 \kap(\cdot, \cdot)$.

\section{Stability of MDS}\label{sec:stability}

\subsection{Stability under Gromov--Kantorovich convergence}\label{sec:stability_GW}

Let $(X_n, d_n, \mu_n)$ be a sequence of separable complete metric measure spaces. Recall that the Gromov--Kantorovich distance\footnote{It is nowadays customary to call these distances and the respective convergence Gromov--Wasserstein, but we would like to avoid this historically incorrect naming.} of order $p$ is defined as~\cite[definition~5.7]{memoli2011gromov}
\begin{equation}\label{eq_GWp1}
GW_p^p(X, X_n) \eqdef \inf_{\gamma \in \Pi(\mu, \mu_n)} \int_{(X \times X_n)^2} \abs{\dist(x, y) - \dist_n(x', y')}^p \,d \gamma(x, x') \,d \gamma(y, y'),
\end{equation}
where  $\Pi(\mu, \mu_n)$ denote the set of transport plans between $\mu$ and $\mu_n$, i.e., joint probability densities on $X \times X_n$ with marginals respectively given by $\mu$, $\mu_n$. Assume $\lim_n GW_4(X, X_n) = 0$. We are going to show that in this case 
\[
\lim_n GW_2\bigl(M^m(X), M_n^m (X_n)\bigr) =0
\]
for every $m\in \N$,
where $M^m$ (resp.\ $M_n^m$) is any MDS map of $(X, \dist, \mu)$ (resp.\ $(X_n, \dist_n, \mu_n)$) into $\R^m$.

Denote by $\gamma_n \in \Pi(\mu, \mu_n)$ any optimal transport plan in~\eqref{eq_GWp1}. 
According to \cite[lemma~5.3.4]{ambrosio2008gradient} there exists a probability measure $\bm{\gamma}$ on the product space
\[
X^\infty \eqdef X \times \prod_n X_n,
\]
 such that its projection on any pair $X \times X_n$ is equal to $\gamma_n$.
Note that $X^\infty$ is still
Polish as a countable product of Polish spaces, e.g.\ with the metric 
\[\dist_\infty(\bm{x}, \bm{y}) \eqdef \dist(x, y) + \sum_{n = 1}^\infty 2^{-n} \frac{\dist_n(x_n, y_n)}{1 + \dist_n(x_n, y_n)}.\]
Embeddings of $L^2(X,\mu)$ and $L^2(X_n, \mu_n)$ to $L^2(X^\infty, \bm{\gamma})$ also allow us to naturally extend the operators $K$, $K_n$, $T$, $T_n$ on this space, with $K$, $T$ vanishing on $\left(L^2(X, \mu)\right)^\bot$, and $K_n$, $T_n$ vanishing on $\left(L^2(X_n, \mu_n)\right)^\bot$, respectively. Note that $K$ is induced by the kernel $\kap(\bm{x}, \bm{y}) = - \frac{1}{2} \dist^2(x, y)$ and $K_n$ is induced by the kernel $\kap_n(\bm{x}, \bm{y}) = - \frac{1}{2} \dist_n^2(x_n, y_n)$. Moreover, it is straightforward to check that
\[
T = P_{\gamma_n} K P_{\gamma_n}, \quad T_n = P_{\gamma_n} K_n P_{\gamma_n}
\]
where $P_{\gamma_n} \eqdef \mathrm{Id} - \one \otimes \one$ is an orthogonal projector in $L^2(X^\infty, \bm{\gamma})$.
Clearly, this embedding does not change any eigenfunctions of $T$ and $T_n$ corresponding to non-zero eigenvalues, thus we can equivalently define the MDS maps $M^m$ and $M_n^m$ using these embedded operators.
The next lemma ensures that $\norm{T_n - T}_{HS} \to 0$ as $GW_4(X, X_n) \to 0$.

\begin{lemma}\label{lem:GW_to_HS}
Setting $C_X = \norm{\dist(\cdot, \cdot)}_{L^4(X \times X, \mu \otimes \mu)}$, one has
	\[
	\norm{T_n - T}_{HS} \le \norm{K_n - K}_{HS} 
	\le  C_X GW_4(X, X_n) + \frac{1}{2} GW_4^2(X, X_n).
	\]
\end{lemma}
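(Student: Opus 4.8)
The plan is to prove the two inequalities separately. The first inequality, $\norm{T_n - T}_{HS} \le \norm{K_n - K}_{HS}$, should follow immediately from the structural relationship $T = P_{\gamma_n} K P_{\gamma_n}$ and $T_n = P_{\gamma_n} K_n P_{\gamma_n}$ (both operators now acting on the common space $L^2(X^\infty, \bm{\gamma})$ via the same projector $P_{\gamma_n}$). Since $P_{\gamma_n}$ is an orthogonal projector, it has operator norm at most one, and the Hilbert--Schmidt norm is submultiplicative with respect to bounded operators, i.e.\ $\norm{P A P}_{HS} \le \norm{P}_{op}^2 \norm{A}_{HS} \le \norm{A}_{HS}$. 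Applying this with $A = K_n - K$ and using linearity of the projector conjugation $P_{\gamma_n}(K_n - K)P_{\gamma_n} = T_n - T$ yields the first inequality directly.

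The heart of the matter is the second inequality, bounding $\norm{K_n - K}_{HS}$ in terms of $GW_4$. First I would write out the Hilbert--Schmidt norm explicitly in terms of the kernels on the product space: since $K - K_n$ is an integral operator on $L^2(X^\infty, \bm{\gamma})$ with kernel $\kap(\bm{x}, \bm{y}) - \kap_n(\bm{x}, \bm{y}) = -\tfrac12(\dist^2(x,y) - \dist_n^2(x_n, y_n))$, one has
\[
\norm{K_n - K}_{HS}^2 = \frac14 \int_{X^\infty \times X^\infty} \bigl(\dist^2(x,y) - \dist_n^2(x_n, y_n)\bigr)^2 \, d\bm{\gamma}(\bm{x})\, d\bm{\gamma}(\bm{y}).
\]
The key algebraic step is to factor the difference of squares as $\dist^2 - \dist_n^2 = (\dist - \dist_n)(\dist + \dist_n)$, writing the integrand as a product and then applying the Cauchy--Schwarz (or rather a triangle-type) estimate so as to separate the factor $\abs{\dist - \dist_n}$, whose $L^4$-norm against $\bm{\gamma}\otimes\bm{\gamma}$ is exactly $GW_4(X,X_n)$ by the defining property that $\bm{\gamma}$ projects onto the optimal plan $\gamma_n$.

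The main obstacle, and the step requiring the most care, is controlling the factor $(\dist + \dist_n)$ and converting the resulting mixed $L^4$ bound into the clean expression $C_X GW_4 + \tfrac12 GW_4^2$. Concretely, I expect to bound $\dist + \dist_n \le 2\dist + \abs{\dist - \dist_n}$, so that the Minkowski inequality in $L^2(\bm{\gamma}\otimes\bm{\gamma})$ applied to the product $\abs{\dist - \dist_n}\cdot(\dist + \dist_n)$ splits the estimate into two pieces: one of the form $\norm{\dist}_{L^4}\cdot\norm{\dist - \dist_n}_{L^4}$, giving the term $C_X\, GW_4$ (after checking that the $L^4$-norm of $\dist$ against the marginal of $\bm{\gamma}$ equals $C_X$), and a quadratic remainder $\norm{\dist - \dist_n}_{L^4}^2$ contributing $\tfrac12 GW_4^2$. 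Verifying that all the relevant marginals of $\bm{\gamma}$ reduce the product-space integrals back to the original spaces $X$ and $X_n$ is routine but must be tracked carefully to obtain the stated constants.
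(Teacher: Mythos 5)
Your proposal is correct and follows essentially the same route as the paper's proof: reduce the Hilbert--Schmidt bound to an $L^2$ estimate on the kernel difference over $\gamma_n\otimes\gamma_n$, split $\dist_n^2-\dist^2$ into a term linear in $\dist_n-\dist$ multiplied by $\dist$ plus a quadratic remainder, then conclude by Minkowski and H\"older together with the optimality of $\gamma_n$ and the marginal conditions. The paper uses the exact identity $\dist_n^2-\dist^2=2\dist(\dist_n-\dist)+(\dist_n-\dist)^2$ rather than your factorization $(\dist_n-\dist)(\dist_n+\dist)$ followed by the bound $\dist_n+\dist\le 2\dist+\abs{\dist_n-\dist}$, but these are the same decomposition and yield identical constants.
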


\begin{proof}
	Clearly, it is enough to consider the restrictions of the operators to $L^2(X \times X_n, \gamma_n)$, with $\gamma \in \Pi(\mu, \mu_n)$:
	\begin{align*}
	\norm{T_n - T}_{HS} & \le \norm{K_n - K}_{HS} 
	= \norm{\kap_n - \kap}_{L^2((X \times X_n)^2, \gamma_n \otimes \gamma_n)} \\
	&=  \frac{1}{2} \left( \int_{(X \times X_n)^2} (\dist_n^2(x_n, y_n) - \dist^2(x, y))^2  d \gamma_n (x, x_n) d \gamma_n (y, y_n)\right)^{1/2} \\
	&\le \left( \int_{(X \times X_n)^2} \dist(x, y) \left(\dist_n(x_n, y_n) - \dist(x, y)\right) d \gamma_n (x, x_n) d \gamma_n (y, y_n) \right)^{1/2} \\
	& \qquad + \frac{1}{2} \left( \int_{(X \times X_n)^2}  \left(\dist_n(x_n, y_n) - \dist(x, y)\right)^2 d \gamma_n (x, x_n) d \gamma_n (y, y_n) \right)^{1/2}  \\
	&\le \left( \int_{X \times X} \dist(x, y)^4 d \mu\otimes \mu (x,y)  \right)^{1/4} GW_4(X, X_n) + \frac{1}{2} GW_4^2(X, X_n).\qedhere
	\end{align*}
\end{proof}

Now let us recall some stability results for compact self-adjoint operators on a Hilbert space (see \cite{rosasco2010learning}).

\begin{proposition}[Kato 1987]\label{prop:spectrum_stab}
	For any self-adjoint compact operators $A$, $B$ on a Hilbert space $\mathcal{H}$ there are enumerations $\{\alpha_i\}_i$, $\{\beta_i\}_i$ of their eigenvalues (counting multiplicity) such that
	\[
	\sum_{i = 1}^\infty \abs{\alpha_i - \beta_i}^2 \le \norm{A - B}_{HS}
	\]
	and, with possibly another order of eigenvalues,
	\[
	\sup_i \abs{\alpha_i - \beta_i} \le \norm{A - B}_{HS}.
	\]
\end{proposition}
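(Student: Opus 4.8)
The plan is to treat the two inequalities separately, reducing each to a finite-dimensional statement and then passing to the limit by exploiting that $A$ and $B$ are compact (so their nonzero eigenvalues accumulate only at $0$) and that $A-B$ is Hilbert--Schmidt. The first inequality is the Hoffman--Wielandt inequality extended to compact operators (whose natural right-hand side is $\norm{A-B}_{HS}^2$), while the second is a Weyl-type perturbation bound. I would prove the finite-dimensional version of each in full and then indicate the limiting procedure, whose bookkeeping is the only genuinely delicate point.

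For the finite-dimensional core of the first inequality, diagonalise the Hermitian matrices as $A=U D_A U^{*}$ and $B=V D_B V^{*}$, with $D_A,D_B$ carrying the eigenvalues, and set $W\eqdef U^{*}V$, which is unitary. Then
\[
\norm{A-B}_{HS}^2=\norm{D_A-W D_B W^{*}}_{HS}^2=\tr D_A^2+\tr D_B^2-2\Re\tr\!\left(D_A W D_B W^{*}\right),
\]
and one computes $\Re\tr(D_A W D_B W^{*})=\sum_{i,j}\alpha_i\beta_j S_{ij}$ with $S_{ij}\eqdef\abs{W_{ij}}^2$ doubly stochastic. By Birkhoff's theorem the maximum of the linear functional $\sum_{i,j}\alpha_i\beta_j S_{ij}$ over doubly stochastic $S$ is attained at a permutation matrix, and by the rearrangement inequality its value equals $\sum_i\alpha_i\beta_i$ once both eigenvalue lists are sorted in the same monotone order. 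Since reordering does not change $\tr D_A^2$ or $\tr D_B^2$, this gives, for the monotone matching,
\[
\norm{A-B}_{HS}^2\ge \tr D_A^2+\tr D_B^2-2\sum_i\alpha_i\beta_i=\sum_i\abs{\alpha_i-\beta_i}^2.
\]

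To pass to infinite dimensions I would truncate each operator to its largest eigenvalues: let $A^{(N)}$ (resp.\ $B^{(N)}$) be the best rank-$N$ approximation of $A$ (resp.\ $B$), obtained by keeping the $N$ eigenvalues of largest modulus. The finite-dimensional subspace $V_N\eqdef\mathrm{range}\,A^{(N)}+\mathrm{range}\,B^{(N)}$ is invariant for both $A^{(N)}$ and $B^{(N)}$, which vanish on $V_N^{\perp}$; hence applying the finite-dimensional inequality to their restrictions to $V_N$ yields a monotone matching of the top-$N$ eigenvalues of $A$ with those of $B$ satisfying the bound with $\norm{A^{(N)}-B^{(N)}}_{HS}^2$, and the latter tends to $\norm{A-B}_{HS}^2$. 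The main obstacle lies exactly in the limit $N\to\infty$: because $A$ and $B$ may each have infinitely many positive and infinitely many negative eigenvalues, both accumulating at $0$, one cannot list all eigenvalues in a single decreasing sequence, and I would have to organise the limiting matching into a positive block (matched in decreasing order), a negative block (matched in increasing order), and an intervening, growing block of zero-to-zero pairs contributing nothing. Extracting from the sequence of finite matchings one consistent enumeration for which $\sum_i\abs{\alpha_i-\beta_i}^2\le\norm{A-B}_{HS}^2$, while correctly accounting for multiplicities and the accumulation at $0$, is the technical heart; alternatively one may invoke Kato's original variational argument as in \cite{rosasco2010learning}.

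The second inequality is shorter. Enumerating the eigenvalues of $A$ and of $B$ in decreasing order and using the Courant--Fischer min--max characterisation for compact self-adjoint operators gives $\abs{\alpha_i-\beta_i}\le\norm{A-B}_{op}$ for every index $i$; since $\norm{A-B}_{op}\le\norm{A-B}_{HS}$, the stated bound $\sup_i\abs{\alpha_i-\beta_i}\le\norm{A-B}_{HS}$ follows. The only care needed is, once more, the bookkeeping of positive and negative eigenvalues and their accumulation at $0$, which the min--max principle handles in the standard way.
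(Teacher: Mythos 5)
First, a point of comparison: the paper itself does not prove this proposition at all --- it is recalled from the literature (Kato's 1987 theorem on variation of discrete spectra, quoted through \cite{rosasco2010learning}), so your attempt must stand on its own rather than against an internal argument. Its finite-dimensional core does stand: the Birkhoff--rearrangement proof of the Hoffman--Wielandt inequality for Hermitian matrices is correct, and you are right that the natural right-hand side is $\norm{A-B}_{HS}^2$ (the unsquared $\norm{A-B}_{HS}$ in the statement is an inhomogeneity/typo, harmless for how the proposition is used in Theorem~\ref{theo:GW_stability}). The genuine gap is the limiting step: the claim that $\norm{A^{(N)}-B^{(N)}}_{HS}^2 \to \norm{A-B}_{HS}^2$ is false for general compact $A$, $B$. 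It holds when $A$ and $B$ are themselves Hilbert--Schmidt, but the proposition assumes only compactness, and the whole difficulty of Kato's theorem is exactly the regime where only the difference $A-B$ is Hilbert--Schmidt; truncation does not respect the Hilbert--Schmidt norm of the difference. Concretely: build $A$ from blocks, the $m$-th block carrying $K_m$ simple eigenvalues close to $a_m$ on an orthonormal family $f_1,\dots,f_{K_m}$, and let $B$ have the same eigenvalues attached to the reversed family $f_{K_m},\dots,f_1$. Then $A-B$ is diagonal with block-$m$ entries of size at most $a_m \varepsilon_m K_m$ (where $\varepsilon_m$ is the intra-block spacing), and choosing $a_m = 2^{-m}$, $K_m = 4^m$, $\varepsilon_m = 2^{-3m}\delta$ makes $\norm{A-B}_{HS}$ as small as you like; yet any truncation cutting block $m$ in half retains, for $A$ and for $B$, rank-$\lfloor K_m/2 \rfloor$ pieces with mutually orthogonal ranges and weights $\approx a_m$, so that $\norm{A^{(N)}-B^{(N)}}_{HS}^2 \gtrsim K_m a_m^2 / 2 = 1/2$ for infinitely many $N$. (The conclusion of the proposition is of course still true there, as $A$ and $B$ are isospectral, but your bound cannot see it.) The natural repair --- compress both operators by one and the same projection $R_N$ onto the joint span of the top eigenvectors of $A$ and of $B$, so that $\norm{R_N(A-B)R_N}_{HS} \le \norm{A-B}_{HS}$ --- creates a new problem, namely that the eigenvalues of $R_N A R_N$ are no longer eigenvalues of $A$; controlling that error and extracting a single consistent limiting enumeration is precisely Kato's actual argument, and your closing sentence (``invoke Kato's original variational argument'') in effect cites the theorem you set out to prove.

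A second, smaller defect concerns the bookkeeping you postpone: for a compact operator on an infinite-dimensional space, $0$ need not be an eigenvalue, so matchings that pad with ``zero-to-zero pairs'', or that pair surplus positive eigenvalues of $A$ against zeros when $B$ has only finitely many positive eigenvalues, are not enumerations of eigenvalues in the literal sense of the statement. Kato's theorem is really about \emph{extended} enumerations, which allow inserting the essential-spectrum point $0$; the paper's wording inherits this looseness from \cite{rosasco2010learning}, and any complete proof must adopt that convention explicitly. Modulo this proviso, your second inequality is sound: applying the Courant--Fischer min--max values separately to the positive and the negative parts of the spectrum gives $\abs{\alpha_i - \beta_i} \le \norm{A-B}$ (operator norm) for the monotone pairing, and $\norm{A-B} \le \norm{A-B}_{HS}$ finishes it; this Weyl-type part needs no repair.
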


\begin{proposition}\label{prop:proj_stab}
	Let $A$ and $B$ be compact self-adjoint operators on a Hilbert space $\mathcal{H}$. Take $k \in \N$ such that $\lambda_k(A) \neq 0$ and define $r_k \eqdef \frac{1}{2} d\bigl(\lambda_k(A), \sigma(A) \setminus \{\lambda_k(A)\}\bigr)$. If $\norm{A - B}_{HS} \le \frac{r_k}{2}$, then
	\[
	\norm{P_A - P_B}_{HS} \le \frac{2}{r_k} \norm{A - B}_{HS},
	\]
	where $P_A$ and $P_B$ are the orthogonal projectors onto $\mathrm{span}\left\{u_i(A) : \lambda_i(A) = \lambda_k(A)\right\}$ and $\mathrm{span}\left\{u_i(B) : \abs{\lambda_i(B) - \lambda_k(A)} \le r_k\right\}$, respectively. Moreover, the dimensions of their ranges coincide.
\end{proposition}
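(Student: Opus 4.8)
The plan is to represent both spectral projectors as Riesz (Dunford) contour integrals of the resolvents around a single circle isolating $\lambda_k(A)$, and then to control their difference through the resolvent identity. First I would fix the contour $\Gamma \eqdef \{z \in \mathbb{C} : \abs{z - \lambda_k(A)} = r_k\}$, the circle of radius $r_k$ centered at $\lambda_k(A)$, and write $\norm{\cdot}$ for the operator norm. By the definition of $r_k$, the only point of $\sigma(A)$ inside $\Gamma$ is $\lambda_k(A)$, while every point of $\sigma(A)$ lies at distance at least $r_k$ from $\Gamma$ (the center is at distance $r_k$, and all other eigenvalues, being at distance at least $2 r_k$ from $\lambda_k(A)$, are at distance at least $r_k$ from the circle). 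Since $A$ is self-adjoint, $\norm{(z - A)^{-1}} = 1/\dist(z, \sigma(A)) \le 1/r_k$ for $z \in \Gamma$, and
\[
P_A = \frac{1}{2\pi i} \oint_\Gamma (z - A)^{-1} \, dz .
\]

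Next I would use the hypothesis $\norm{A - B}_{HS} \le r_k/2$ together with Proposition~\ref{prop:spectrum_stab} to locate $\sigma(B)$: in the enumeration supplied there, each $\lambda_i(B)$ lies within $r_k/2$ of the corresponding $\lambda_i(A)$. Hence no eigenvalue of $B$ lies on $\Gamma$; every eigenvalue of $B$ sits at distance at least $r_k/2$ from $\Gamma$, giving $\norm{(z - B)^{-1}} \le 2/r_k$ for $z \in \Gamma$; and the eigenvalues of $B$ enclosed by $\Gamma$ are exactly those matched to $\lambda_k(A)$, i.e.\ those with $\abs{\lambda_i(B) - \lambda_k(A)} \le r_k$. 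Therefore the Riesz projector $\frac{1}{2\pi i}\oint_\Gamma (z - B)^{-1}\,dz$ coincides with the operator $P_B$ defined in the statement.

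With both projectors expressed over the same contour, the core estimate comes from the resolvent identity $(z - A)^{-1} - (z - B)^{-1} = (z - A)^{-1}(A - B)(z - B)^{-1}$, which yields
\[
P_A - P_B = \frac{1}{2\pi i}\oint_\Gamma (z - A)^{-1}(A - B)(z - B)^{-1}\,dz .
\]
Bounding the integrand in Hilbert--Schmidt norm by $\norm{(z-A)^{-1}}\,\norm{A-B}_{HS}\,\norm{(z-B)^{-1}} \le \frac{2}{r_k^2}\,\norm{A-B}_{HS}$ and using that $\Gamma$ has length $2\pi r_k$ gives $\norm{P_A - P_B}_{HS} \le \frac{2}{r_k}\norm{A - B}_{HS}$, as claimed.

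For the equality of the ranges I would repeat the construction along the segment $A_t \eqdef (1 - t)A + tB$, $t \in [0,1]$. Since $\norm{A_t - A}_{HS} = t\,\norm{B - A}_{HS} \le r_k/2$, the same spectral-localization argument keeps $\Gamma$ off $\sigma(A_t)$ for all $t$, so the Riesz projector along $\Gamma$ depends continuously (in operator norm) on $t$; its rank is then an integer-valued continuous, hence constant, function of $t$, and $\mathrm{rank}\,P_A = \mathrm{rank}\,P_B$. I expect the main obstacle to be precisely this localization bookkeeping: one must verify, using only the $\sup_i$ estimate of Proposition~\ref{prop:spectrum_stab}, that no eigenvalue of $B$ (nor of any $A_t$) crosses the circle $\Gamma$, so that the contour integral genuinely reproduces the projector $P_B$ singled out in the statement rather than a larger or smaller spectral subspace.
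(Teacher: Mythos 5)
Your proof is correct, and at its core it runs on the same mechanism as the paper's: the Riesz projector over the circle $\Gamma$ of radius $r_k$ centered at $\lambda_k(A)$, with the constants $\ell(\Gamma)=2\pi r_k$ and resolvent bounds of order $1/r_k$, $2/r_k$ producing exactly the factor $2/r_k$. The difference is that the paper's proof is essentially a two-line application of a cited black box (theorem~20 of \cite{rosasco2010learning}, which encapsulates the resolvent-identity estimate), whereas you re-derive that estimate from scratch. Your version buys three things the paper leaves implicit: (a) the verification, via the $\sup_i$ bound of Proposition~\ref{prop:spectrum_stab}, that $\sigma(B)$ splits into points within $r_k/2$ of $\lambda_k(A)$ and points at distance at least $3r_k/2$ from it, so that the Riesz integral for $B$ really equals the projector $P_B$ singled out in the statement (this identification is asserted without comment in the paper); (b) an explicit proof of the rank equality by a homotopy $A_t=(1-t)A+tB$ along which the localization persists, rather than inheriting it from the citation — note that the crude bound $\norm{P_A-P_B}\le 1$ is not strict, so your local argument with $\abs{t-s}<1/2$ is genuinely needed; and (c) the Hilbert--Schmidt form of the conclusion: the paper's displayed inequality is written in operator norm, while the proposition claims an HS bound, and your use of the ideal property $\norm{XYZ}_{HS}\le\norm{X}\,\norm{Y}_{HS}\,\norm{Z}$ inside the contour integral delivers the stated HS inequality directly. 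One point to make explicit if you polish this: the resolvent bound for $B$ on $\Gamma$ must cover all of $\sigma(B)$, including the point $0$ (present in infinite dimensions), but this is harmless since $0\in\sigma(A)\setminus\{\lambda_k(A)\}$ forces $\abs{\lambda_k(A)}\ge 2r_k$, hence $\dist(0,\Gamma)\ge r_k$.
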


\begin{proof}
	The result follows from theorem~20 in \cite{rosasco2010learning} (recall that for self-adjoint operators spectral projectors considered in that theorem are orthogonal). Indeed, take as a curve $\Gamma \subset \mathbb{C}$ enclosing $\lambda_k(A)$ the circle of radius $r_k$ centered at $\lambda_k(A)$, so 
	\[
	\delta \eqdef \min_{\lambda \in \sigma(A), z \in \Gamma} \abs{z - \lambda} = r_k
	\] 
	and $\ell(\Gamma) = 2 \pi \delta$. Then
	\[
	\norm{P_A - P_B} \le \frac{\ell(\Gamma)}{2 \pi \delta} \frac{\norm{A - B}}{\delta - \norm{A - B}} 
	\le \frac{2 \norm{A - B}}{\delta} 
	\]
	and the dimensions of their ranges coincide.
\end{proof}

Now we are ready to prove the main result of this section.

\begin{theorem}\label{theo:GW_stability}
	Let $\lim_n GW_4(X, X_n) = 0$. If $m \in \N$ is such that $\lambda^+_m(T) > \lambda^+_{m+1}(T)$, then 
	\[
	\min_{Q \in O(m)} \norm{M^m - Q M^m_n}_{L^2(X^\infty,\bm{\gamma})} \to 0
	\]
	as $n \to \infty$, where $O(m)$ is the orthogonal group on $\R^m$. 
	Moreover, if $\deg \lambda^+_j(T) = 1$ for all $1 \le j \le m$, then for every $n$ there exists some diagonal matrix $D_n \in \R^{m\times m}$ with diagonal elements in $\{-1, 1\}$, such that
	\[
	\lim_{n\to \infty} \norm{M^m - D_n M^m_n}_{L^2(X^\infty,\bm{\gamma})} = 0.
	\]
\end{theorem}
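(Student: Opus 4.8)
The plan is to transfer convergence from the operators $T_n$ to $T$ (already established in Hilbert--Schmidt norm by Lemma~\ref{lem:GW_to_HS}) up to the level of the MDS maps, which are built from the positive eigenvalues and eigenfunctions of these operators. First I would fix $m$ with the spectral gap $\lambda^+_m(T) > \lambda^+_{m+1}(T)$ and recall from Lemma~\ref{lem:GW_to_HS} that $\norm{T_n - T}_{HS} \to 0$. The gap guarantees that for $n$ large the operator $T_n$ has exactly $m$ of its largest positive eigenvalues lying near $\lambda^+_1(T), \dots, \lambda^+_m(T)$, separated from the rest of its spectrum; this is where Proposition~\ref{prop:spectrum_stab} controls the eigenvalues $\sqrt{\lambda^+_j(T_n)} \to \sqrt{\lambda^+_j(T)}$, and Proposition~\ref{prop:proj_stab} controls the total spectral projector $P_n$ onto the top-$m$ positive eigenspace of $T_n$, giving $\norm{P_n - P}_{HS} \le \frac{2}{r}\norm{T_n - T}_{HS} \to 0$, where $P$ is the analogous projector for $T$ and $r$ depends on the spectral gaps.

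Next I would assemble the map $M^m$ out of this data. The key observation is that $M^m(x) = (\sqrt{\lambda^+_j}\, u^+_j(x))_j$ is essentially an encoding of the operator $\sqrt{T P}$ restricted to the top-$m$ eigenspace, and the quantity that is genuinely intrinsic (independent of the arbitrary choice of orthonormal eigenbasis) is the Gram-type object $M^m(x) \cdot M^m(y) = \sum_{j \le m}\lambda^+_j u^+_j(x) u^+_j(y)$, i.e.\ the kernel of $\sqrt{T}P\sqrt{T}$ on the relevant spectral subspace. I would show that the corresponding kernels for $T_n$ converge in $L^2(X^\infty,\bm\gamma \otimes \bm\gamma)$ to that of $T$, by writing the difference in terms of $\sqrt{T}P\sqrt{T} - \sqrt{T_n}P_n\sqrt{T_n}$ and estimating it via $\norm{P_n - P}_{HS}$ and the eigenvalue convergence from the two propositions (using that the square root and the spectral truncation are Lipschitz on the region of interest, thanks to the gap bounding eigenvalues away from $0$). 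Since the two maps $M^m, M^m_n$ land in the same $\R^m$ (after the embedding into $L^2(X^\infty,\bm\gamma)$ described before the theorem), $L^2$-closeness of these Gram kernels forces $M^m_n$ to be close to $M^m$ modulo an orthogonal transformation, because an $m$-tuple of $L^2$ functions is determined up to an element of $O(m)$ by its Gram matrix; this yields the first assertion $\min_{Q \in O(m)} \norm{M^m - Q M^m_n}_{L^2} \to 0$.

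For the refined statement with a diagonal sign matrix $D_n$, I would use the additional hypothesis $\deg \lambda^+_j(T) = 1$ for $j \le m$, i.e.\ all top eigenvalues are simple. Simplicity pins down each eigenfunction $u^+_j$ up to a sign, so the residual orthogonal ambiguity $Q$ degenerates from all of $O(m)$ to the finite group of diagonal $\pm 1$ matrices: Proposition~\ref{prop:proj_stab} applied to each one-dimensional eigenspace separately gives convergence of the individual rank-one projectors $u^+_j(T_n)\otimes u^+_j(T_n) \to u^+_j(T)\otimes u^+_j(T)$ in $\norm{\cdot}_{HS}$, which forces $u^+_j(T_n) \to \pm u^+_j(T)$ in $L^2$; choosing the sign $d_{n,j} \in \{-1,1\}$ that matches delivers $D_n = \mathrm{diag}(d_{n,1},\dots,d_{n,m})$ with $\norm{M^m - D_n M^m_n}_{L^2} \to 0$.

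The main obstacle I expect is the bookkeeping around the degenerate (non-simple) eigenvalues in the first part: since the individual eigenfunctions are only defined up to an orthogonal rotation within each eigenspace, one cannot compare $u^+_j(T_n)$ to $u^+_j(T)$ directly, and the argument must be phrased entirely at the level of the total spectral projectors and the associated kernels, extracting the $O(m)$ only at the very end. Care is also needed to ensure the gap hypothesis $\lambda^+_m(T) > \lambda^+_{m+1}(T)$ (together with $\lambda^+_m(T) > 0$) genuinely separates the top-$m$ positive part of $\sigma(T_n)$ from both the remaining positive eigenvalues and from $0$, so that Proposition~\ref{prop:proj_stab} applies with a uniform $r_k$; this is exactly what makes the square-root and truncation operations stable.
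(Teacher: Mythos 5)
Your plan shares its skeleton with the paper's proof: Lemma~\ref{lem:GW_to_HS} gives $\norm{T_n - T}_{HS}\to 0$; Propositions~\ref{prop:spectrum_stab} and~\ref{prop:proj_stab} give convergence of eigenvalues and of the spectral projectors attached to the top $m$ positive eigenvalues (the hypothesis $\lambda^+_m(T)>\lambda^+_{m+1}(T)\ge 0$ being exactly what isolates this part of the spectrum from the rest and from $0$); and in the simple-spectrum case both you and the paper obtain the sign matrices $D_n$ from convergence of the individual rank-one projectors, which forces $u_j^+(T_n)\to\pm u_j^+(T)$ in $L^2$. The difference is the finishing move for the first assertion. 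The paper stays at the level of eigenfunctions: from $\norm{(P_T-P_{T_n})u_i(T_n)}\le \norm{P_T-P_{T_n}}\to 0$ it deduces that the $u_i(T_n)$ lie within $o(1)$ of the \emph{fixed} finite-dimensional space $\mathrm{range}(P_T)$, extracts by compactness subsequential limits forming an orthonormal basis of each eigenspace of $T$, and uses that two orthonormal bases of the same space differ by an orthogonal matrix; this produces the (block-diagonal) $Q^n$ and handles degenerate eigenvalues at no extra cost. You instead pass to the Gram kernel $\sum_{j\le m}\lambda_j^+(T)\, u_j^+(T)\otimes u_j^+(T)$ --- better written this way, or as the restriction of $T$ to its top spectral subspace, since $\sqrt{T}$ alone is undefined ($T$ has negative spectrum) --- prove its Hilbert--Schmidt convergence, and then appeal to the fact that an $m$-tuple of $L^2$ functions is determined by its Gram kernel up to an element of $O(m)$.

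That last appeal is where your argument has a real gap: what you quote is a \emph{uniqueness} statement, and uniqueness does not by itself convert closeness of Gram kernels into closeness of the maps up to $O(m)$; that is a \emph{stability} assertion which needs its own proof. It is fixable with ingredients you already have, in either of two ways. One is a quantitative Procrustes-type bound, which does hold here because the $m$ relevant eigenvalues of both operators stay bounded below (by, say, $\lambda_m^+(T)/2$ for large $n$), so the range projectors and the square roots on the ranges are stable. The other is a compactness argument: since $\norm{P_{T_n}-P_T}\to 0$, the components of $M_n^m$ lie within $o(1)$ of the fixed $m$-dimensional range of the limiting projectors, so subsequential $L^2$ limits exist, inherit the limiting Gram kernel, and your uniqueness fact identifies each limit with $Q^{\T} M^m$ for some $Q\in O(m)$; a subsequence-of-subsequences argument then yields $\min_{Q\in O(m)}\norm{M^m-QM^m_n}_{L^2(X^\infty,\bm{\gamma})}\to 0$. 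Note that this second repair is, in substance, the paper's own proof, so the detour through Gram kernels buys a basis-free formulation (which you rightly wanted in order to sidestep the eigenbasis ambiguity) but no genuine shortcut.
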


\begin{proof}
	Fix some $\lambda_j^+(T) > 0$ and consider the projectors $P_T$ and $P_{T_n}$ coming from Proposition~\ref{prop:proj_stab}.
	Let $P_T = \sum_{i \in I} u_i(T) \otimes u_i(T)$ for a suitable index set $I$ of size $d = \dim P_T$. For simplicity we can assume without loss of generality that $I = \{1, \dots, d\}$.
	Lemma~\ref{lem:GW_to_HS} implies that $\norm{T_n - T}_{HS} \to 0$.
	Then Propositions~\ref{prop:spectrum_stab} and~\ref{prop:proj_stab} yield that $P_{T_n} \to P_T$, and for $n$ large enough $P_{T_n} = \sum_{i \in I} u_i(T_n) \otimes u_i(T_n)$. Since
	\[
	\norm{(P_T - P_{T_n}) u_i(T_n)} = \norm{P_T u_i(T_n) - u_i(T_n)} \le \norm{P_T - P_{T_n}} \to 0,
	\]
	$\{u_i(T_n)\}_{i \in I}$ converges, up to a subsequence, to an orthonormal basis of $\operatorname{range}(T) = \mathrm{span}\{u_i(T) : i \in I\}$. Since all orthonormal bases are equivalent up to an orthogonal transformation, we obtain that in fact there are $Q^n \in O(d)$ such that for all $i = 1, \dots, d$,
	\[
	\sum_{j = 1}^d Q^n_{ij} u_j(T_n) \to u_i(T) \text{~~in~~} L^2(X^\infty,\bm{\gamma}).
	\]
	
	Combining the above result with the convergence of the spectra (Proposition~\ref{prop:spectrum_stab}), we conclude that
	\[
	\min_{Q \in O(m)} \norm{M^m - Q M^m_n}^2_{L^2(X^\infty,\bm{\gamma})} \to 0.
\qedhere	\]
\end{proof}

\begin{remark}
	We require $\lambda^+_m(T) > \lambda^+_{m+1}(T)$ to ensure that the MDS map $M^m$ is uniquely defined (up to an orthogonal transformation).
\end{remark}

\begin{corollary}\label{cor:GW_stability}
	Under the assumptions of Theorem~\ref{theo:GW_stability} for every $m\in \N$ one has
	\[
	GW_2\bigl(M^m(X), M_n^m (X_n)\bigr) \to 0
	\]
	as $n\to \infty$, where $M^m(X)$ stands for the space $(M^m(X), \dist_m, M^m_\#\mu)$ and  $M_n^m (X_n)$
	stands for $(M_n^m(X_n), \dist_m, (M^m_n)_\#\mu_n)$, $\dist_m$ standing for the usual Euclidean distance in $\R^m$.
\end{corollary}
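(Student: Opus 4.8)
The plan is to deduce the corollary directly from Theorem~\ref{theo:GW_stability} by converting the $L^2(X^\infty, \bm{\gamma})$-closeness of the maps $M^m$ and $M^m_n$ into a bound on the Gromov--Kantorovich distance of their pushforward spaces, reusing the coupling measure $\bm{\gamma}$ already at hand. First I would use $\bm{\gamma}$ (whose $X$-marginal is $\mu$ and whose $X_n$-marginal is $\mu_n$) to manufacture an admissible transport plan between $M^m_\#\mu$ and $(M^m_n)_\#\mu_n$: namely the pushforward $\pi_n \eqdef (M^m, M^m_n)_\#\bm{\gamma}$ under the map $\bm{x} \mapsto (M^m(\bm{x}), M^m_n(\bm{x})) \in \R^m \times \R^m$, where, exactly as in the proof of the theorem, $M^m$ and $M^m_n$ are read as functions on $X^\infty$ depending only on the $X$- and $X_n$-coordinates respectively. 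Since $(\pi_X)_\#\bm\gamma = \mu$ and $(\pi_{X_n})_\#\bm\gamma = \mu_n$, the marginals of $\pi_n$ are precisely $M^m_\#\mu$ and $(M^m_n)_\#\mu_n$, so $\pi_n$ is an element of the competition defining $GW_2$. Plugging it in yields
\[
GW_2^2\bigl(M^m(X), M^m_n(X_n)\bigr) \le \int_{(X^\infty)^2} \Bigl(\, \abs{M^m(\bm x) - M^m(\bm y)} - \abs{M^m_n(\bm x) - M^m_n(\bm y)} \Bigr)^2 \, d\bm\gamma(\bm x)\, d\bm\gamma(\bm y).
\]

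The next step is to dispose of the orthogonal-transformation ambiguity left open by Theorem~\ref{theo:GW_stability}. Since the Euclidean distance on $\R^m$ is invariant under $O(m)$, for any $Q \in O(m)$ one has $\abs{M^m_n(\bm x) - M^m_n(\bm y)} = \abs{Q M^m_n(\bm x) - Q M^m_n(\bm y)}$, so the integrand is unchanged upon replacing $M^m_n$ by $Q M^m_n$. Choosing $Q = Q_n$ to be the minimizer furnished by the theorem and applying the reverse triangle inequality $\bigl|\abs{a} - \abs{b}\bigr| \le \abs{a - b}$ with $a = M^m(\bm x) - M^m(\bm y)$ and $b = Q_n M^m_n(\bm x) - Q_n M^m_n(\bm y)$, I would bound the integrand by $\abs{g(\bm x) - g(\bm y)}^2$, where $g \eqdef M^m - Q_n M^m_n \in L^2(X^\infty, \bm\gamma; \R^m)$.

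Finally, expanding $\abs{g(\bm x) - g(\bm y)}^2$ and integrating against $\bm\gamma \otimes \bm\gamma$ (the cross term contributing $-2\abs{\int g \, d\bm\gamma}^2 \le 0$) gives
\[
GW_2^2\bigl(M^m(X), M^m_n(X_n)\bigr) \le 2 \norm{g}_{L^2(X^\infty, \bm\gamma)}^2 = 2 \min_{Q \in O(m)} \norm{M^m - Q M^m_n}_{L^2(X^\infty, \bm\gamma)}^2,
\]
which tends to $0$ by Theorem~\ref{theo:GW_stability}, proving the claim. The argument is short, and I do not expect a genuine obstacle beyond bookkeeping with the marginals of $\bm\gamma$; the one conceptual point to get right is the reduction from $GW$ to $L^2$, and in particular the observation that the $O(m)$-freedom in the theorem is harmless precisely because $GW_2$ is insensitive to isometries of the target $\R^m$. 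The same remark shows that in the second regime of the theorem, where $\deg \lambda^+_j(T) = 1$ for all $1 \le j \le m$, one may equally use the diagonal sign matrices $D_n$ in place of the general $Q_n$.
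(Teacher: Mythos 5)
Your proof is correct and follows essentially the same route as the paper: both bound $GW_2$ by plugging in the transport plan induced by the coupling between $\mu$ and $\mu_n$, exploit the $O(m)$-invariance of the Euclidean distance to insert the orthogonal matrix from Theorem~\ref{theo:GW_stability}, and reduce to the $L^2(X^\infty,\bm{\gamma})$ distance of the maps via the triangle inequality. The only (immaterial) difference is bookkeeping: your reverse-triangle-inequality plus expansion of $\abs{g(\bm x)-g(\bm y)}^2$ yields the constant $2$ where the paper's split gives $4$.
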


\begin{proof}
	We bound the Gromov--Kantorovich distance between the MDS images with the distance between the maps themselves in $L^2(X^\infty,\bm{\gamma})$: for any $Q \in O(m)$,
	\begin{align*}
	&GW_2^2 \bigl(M^m(X), M_n^m (X_n)\bigr) \\
	&\quad \le \int\limits_{(X \times X_n)^2} \left(|M^m(x) - M^m(y)| - |M^m_n(x') - M^m_n(y')|\right)^2 \,d \gamma_n(x, x') \,d \gamma_n(y, y') \\
	&\quad = \int\limits_{(X \times X_n)^2} \left(|M^m(x) - M^m(y)| - |Q M^m_n(x') - Q M^m_n(y')|\right)^2 \,d \gamma_n(x, x') \,d \gamma_n(y, y') \\
	&\quad \le \int\limits_{(X \times X_n)^2} \left(|M^m(x) - Q M^m_n(x')| + |M^m(y) - Q M^m_n(y')|\right)^2 \,d \gamma_n(x, x') \,d \gamma_n(y, y') \\
	&\quad \le 4 \int\limits_{X \times X_n} |M^m(x) - Q M^m_n(x')|^2 \,d \gamma_n(x, x') \\
	&\quad = 4 \norm{M^m - Q M^m_n}^2_{L^2(X \times X_n, \gamma_n)} = 4 \norm{M^m - Q M^m_n}^2_{L^2(X^\infty, \bm{\gamma})}.
	\end{align*}
	Using Theorem~\ref{theo:GW_stability}, we conclude the proof.
\end{proof}

\subsection{Stability under Kantorovich convergence}\label{sec:stability_W}

In this section we consider the case where $\mu$ and $\mu_n$ are defined on the same space $X$, and $\mu_n$ converge to $\mu$ in the Kantorovich distance $W_4$.

\subsubsection{Embedding of a finite sample to $L^2(X,\mu)$.}

Assume again we are given points $x_1, \dots, x_n \in X$. Define the empirical measure $\mu_n = \frac{1}{n} \sum_{i=1}^n \delta_{x_i}$.
Suppose that an optimal transport plan from $\mu$ to $\mu_n$ with a cost function $\dist^4(\cdot, \cdot)$ gives us a  partition of $X$ into disjoint sets $V_1, \dots, V_n$, such that any $x \in V_i$ is transported to $t_n(x) := x_i$, i.e.\ there exists a solution to the Monge optimal transportation problem (which is the case e.g., if $\mu$ has no atoms). Define the operator
\[
A_n \colon L^2(X,\mu) \to \R^n,\qquad A_n f \eqdef \sqrt{n} \left(\int_{V_i} f \,d \mu \right)_{i =1,\ldots,n}.
\]
 It is easy to see that the adjoint operator $A_n^* \colon \R^n \to L^2(X, \mu)$ is given by
\[
A_n^* x = \sqrt{n} \sum_{i = 1}^n x_i \one_{V_i} .
\]
In particular, $A_n A_n^* = \mathrm{Id}_n$ since $\mu(V_i) = \frac{1}{n}$, hence $A_n^*$ is an isometric embedding of $\R^n$ into $L^2(X,\mu)$.
Furthermore, $A_n \one = \frac{1}{\sqrt{n}} \one_n$, $A_n^* \one_n = \sqrt{n} \one$, and
\[
A_n (\one \otimes \one) = \frac{1}{\sqrt{n}} \one_n \otimes \one, \quad
\one_n \one_n^\T A_n = \one_n \otimes (A_n^* \one_n) = \sqrt{n} \one_n \otimes \one.
\]
Thus,
\[
A_n P = A_n (\mathrm{Id} - \one \otimes \one) 
= A_n - \frac{1}{\sqrt{n}} \one_n \otimes \one
= A_n - \frac{1}{n} \one_n \one_n^\T A_n
= \bar{P}_n A_n .
\]
Now we consider the operators
\[
K_n \eqdef A_n^* \bar{K}_n A_n \text{~~and~~} T_n \eqdef A_n^* \bar{T}_n A_n = P K_n P ,
\]
where $\bar{K}_n$ and $\bar{T}_n$ are defined by \eqref{eq:MDS_matrix_K} and \eqref{eq:MDS_matrix_T} respectively.
The eigenvalues of $T_n$ are $\lambda_1(\bar{T}_n), \dots, \lambda_n(\bar{T}_n)$, all the remaining eigenvalues being zero and the first $n$ eigenfunctions are given by 
$u_j(T_n) = A_n^* v_j(\bar{T}_n)$, where $v_j(\bar{T}_n)$ is an eigenvector of $\bar{T}_n$. Indeed,
\[
T_n u_j(T_n)
= n A_n^* \bar{T}_n A_n A_n^* v_j(\bar{T}_n) 
= A_n^* \bar{T}_n v_j(\bar{T}_n)
= \lambda_j(\bar{T}_n) A_n^* v_j(\bar{T}_n)
= \lambda_j(\bar{T}_n) u_j(T_n) .
\]
Note that $\norm{u_j(T_n)}_{L^2(X,\mu)} = \abs{v_j(\bar{T}_n)}$,
this allows us to define an empirical MDS map $\mu$-a.e.\ by the formula
\begin{equation}\label{eq:MDS_n}
M_n(x) \eqdef \left(\sqrt{\lambda_j^+(T_n)} u_j^+(T_n)(x)\right)_{j \in \N}.
\end{equation}
Clearly, the metric measure spaces obtained by $\bar{M}_n$ and $M_n$ from $\{x_1, \dots, x_n\}$ and $X$, respectively, are the same.

\subsubsection{Convergence}

One can see that $K_n$ is also an integral operator, that is,
\[
K_n f (x) = \int_X k_n(x, y) f(y) \,d \mu(x),
\]
where $k_n(x, y) \eqdef k(t_n(x), t_n(y))$.
The following lemma controls $\norm{T_n - T}_{HS}$ with $W_4(\mu_n, \mu)$ similarly to Lemma~\ref{lem:GW_to_HS}.

\begin{lemma}\label{lem:W_to_HS}
	One has, setting $C_X = \norm{\dist(\cdot, \cdot)}_{L^4(X \times X, \mu \otimes \mu)}$, 
	\[
	\norm{T_n - T}_{HS} 	\le 2 C_X W_4(\mu_n, \mu) + 2 W_4^2(\mu_n, \mu).
	\]
\end{lemma}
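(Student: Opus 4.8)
The plan is to follow closely the template of Lemma~\ref{lem:GW_to_HS}, reducing everything to an $L^2$ estimate on the difference of the two kernels and then exploiting the optimality of the transport map $t_n$. First I would observe that, since $T = P K P$ and $T_n = P K_n P$ with $P$ an orthogonal projector (so $\norm{P} \le 1$), one has $\norm{T_n - T}_{HS} \le \norm{K_n - K}_{HS}$. Because both $K$ and $K_n$ are integral operators on $L^2(X,\mu)$ with kernels $\kap(x,y) = -\tfrac12 \dist^2(x,y)$ and $k_n(x,y) = \kap(t_n(x), t_n(y))$ respectively, the Hilbert--Schmidt norm is just the $L^2(\mu\otimes\mu)$-norm of the kernel difference, namely
\[
\norm{K_n - K}_{HS} = \frac12 \left( \int_{X\times X} \left( \dist^2(t_n(x), t_n(y)) - \dist^2(x,y) \right)^2 \,d\mu\otimes\mu \right)^{1/2}.
\]

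Next I would control the integrand pointwise. Writing $\delta(x) \eqdef \dist(x, t_n(x))$, two applications of the triangle inequality give $\abs{\dist(t_n(x), t_n(y)) - \dist(x,y)} \le \delta(x) + \delta(y)$ and $\dist(t_n(x), t_n(y)) + \dist(x,y) \le 2\dist(x,y) + \delta(x) + \delta(y)$, whence, factoring the difference of squares,
\[
\abs{\dist^2(t_n(x), t_n(y)) - \dist^2(x,y)} \le 2 (\delta(x)+\delta(y)) \dist(x,y) + (\delta(x)+\delta(y))^2.
\]
Substituting this into the previous display and applying the triangle inequality in $L^2(\mu\otimes\mu)$ splits the estimate into a ``cross'' term involving $(\delta(x)+\delta(y))\dist(x,y)$ and a ``quadratic'' term involving $(\delta(x)+\delta(y))^2$.

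For the cross term I would apply Cauchy--Schwarz in the form $\int s^2 \dist^2 \le (\int s^4)^{1/2}(\int \dist^4)^{1/2}$ with $s = \delta(x)+\delta(y)$, recognising $(\int \dist^4 \,d\mu\otimes\mu)^{1/4} = C_X$; for the quadratic term I would directly identify $\norm{(\delta(x)+\delta(y))^2}_{L^2(\mu\otimes\mu)} = \norm{\delta(x)+\delta(y)}_{L^4(\mu\otimes\mu)}^2$. The decisive input is then the optimality of $t_n$: since $t_n$ solves the Monge problem for the cost $\dist^4$, one has $\int_X \delta^4 \,d\mu = W_4^4(\mu_n,\mu)$, and Minkowski's inequality in $L^4(\mu\otimes\mu)$ yields $\norm{\delta(x)+\delta(y)}_{L^4(\mu\otimes\mu)} \le 2\norm{\delta}_{L^4(\mu)} = 2 W_4(\mu_n,\mu)$. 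Combining the two bounds gives $\norm{K_n - K}_{HS} \le 2 C_X W_4(\mu_n,\mu) + 2 W_4^2(\mu_n,\mu)$, as required.

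The computation is essentially routine once it is set up; the only genuine point to watch is the bookkeeping of the constants, in particular the factor $2$ (rather than the $1$ and $\tfrac12$ of Lemma~\ref{lem:GW_to_HS}), which arises precisely because here the distance defect is reconstructed \emph{symmetrically} from the two displacements $\delta(x)$ and $\delta(y)$, so that the Minkowski step in $L^4$ costs a factor $2$. One should also keep in mind the standing assumption that an optimal Monge map $t_n$ exists (e.g.\ $\mu$ atomless), which is exactly what legitimises the identity $\int_X \delta^4 \,d\mu = W_4^4(\mu_n,\mu)$.
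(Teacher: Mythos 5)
Your proof is correct and takes essentially the same route as the paper's: reduce to the $L^2(\mu\otimes\mu)$ norm of the kernel difference, factor the difference of squares, use Cauchy--Schwarz against $\norm{\dist(\cdot,\cdot)}_{L^4}$ for the cross term, and use Minkowski in $L^4$ together with optimality of the Monge map $t_n$ to bound the displacement term by $2W_4(\mu_n,\mu)$. The only (inessential) difference is that you apply the triangle inequality pointwise, replacing $\abs{\dist(t_n(x),t_n(y))-\dist(x,y)}$ by $\dist(x,t_n(x))+\dist(y,t_n(y))$ before taking norms, whereas the paper keeps that defect and bounds its $L^4(\mu\otimes\mu)$ norm afterwards; the constants come out identically.
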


\begin{proof}
	First of all,
	\begin{align*}
		\norm{T_n - T}_{HS} & \le \norm{K_n - K}_{HS} 
		= \norm{k_n - k}_{L^2(X \times X, \mu \otimes \mu)} \\
		&= \frac{1}{2} \left( \int_{X \times X} \left( \dist^2(t_n(x), t_n(y)) - \dist^2(x, y)\right)^2 d \mu \otimes \mu (x,y)\right)^{1/2}.
	\end{align*}
	Note that
	\begin{multline*}
		\abs{\dist^2(t_n(x), t_n(y)) - \dist^2(x, y)} \\
		\le 2 \dist(x, y) \abs{\dist(t_n(x), t_n(y)) - \dist(x, y)} + \abs{\dist(t_n(x), t_n(y)) - \dist(x, y)}^2
	\end{multline*}
	and
	\begin{align*}
		& \left( \int_{X \times X} \left( \dist(t_n(x), t_n(y)) - \dist(x, y)\right)^4 d \mu \otimes \mu (x,y) \right)^{1/4}\\
		&\qquad \le \left( \int_{X \times X} \left(\dist(x, t_n(x)) + \dist(y, t_n(y))\right)^4 d \mu \otimes \mu (x,y)\right)^{1/4} \\
		&\qquad \le 2\left( \int_{X} \dist(x, t_n(x))^4 d \mu(x) \right)^{1/4} = 2 W_4(\mu_n, \mu).
	\end{align*}
	Therefore, we have by the triangle and H{\"o}lder inequalities that
	\begin{align*}
		\MoveEqLeft[8]\left( \int_{X \times X} \left( \dist^2(t_n(x), t_n(y)) - \dist^2(x, y)\right)^2 d \mu \otimes \mu (x,y) \right)^{1/2} \\
		&\le 2 \left( \int_{X \times X} \left(\dist(x, y) [\dist(t_n(x), t_n(y)) - \dist(x, y)]\right)^2 d \mu \otimes \mu (x,y) \right)^{1/2} \\
		&~~~+ \left( \int_{X \times X} \left(\dist(t_n(x), t_n(y)) - \dist(x, y)\right)^4 d \mu \otimes \mu (x,y) \right)^{1/2} \\
		&\le 4 \left( \int_{X \times X} \dist(x, y)^4 d \mu\otimes \mu(x,y)\right)^{1/4} W_4(\mu_n, \mu) + 4 W_4^2(\mu_n, \mu).
	\end{align*}
\end{proof}

In the same way as in Section~\ref{sec:stability_GW} we now obtain the following statement.

\begin{theorem}\label{theo:W_stability}
	Let $\lim_n W_4(\mu_n, \mu) =0$. If $\lambda^+_m(T) > \lambda^+_{m+1}(T)$, then (up to an orthogonal transformation of $\R^m$)
	\[
	M^m_n \to M^m \text{ in } L^2(X, \mu; \R^m) 
	\]
	as $n\to\infty$.
	In particular, $M_n$ converge to $M$ in measure $\mu$ with respect to the product topology on $\R^\infty$ (which is metrizable, so one can define the convergence in measure).
\end{theorem}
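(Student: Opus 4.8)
The plan is to follow the proof of Theorem~\ref{theo:GW_stability} almost verbatim, exploiting the crucial simplification that here $\mu_n$ and $\mu$ live on the \emph{same} space $X$, so that both $T_n = P K_n P$ and $T = P K P$ act on the single Hilbert space $L^2(X,\mu)$. Consequently no auxiliary product space $X^\infty$ or coupling $\bm{\gamma}$ is needed, and $L^2(X^\infty,\bm{\gamma})$ is simply replaced throughout by $L^2(X,\mu)$. The starting point is to invoke Lemma~\ref{lem:W_to_HS} in place of Lemma~\ref{lem:GW_to_HS}, which turns the hypothesis $W_4(\mu_n,\mu)\to 0$ into $\norm{T_n - T}_{HS}\to 0$.

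Next I would observe that the hypothesis $\lambda^+_m(T) > \lambda^+_{m+1}(T)$ provides a spectral gap just above the $m$-th positive eigenvalue; since equal eigenvalues occupy consecutive positions in the non-increasing enumeration, the block $\{1,\dots,m\}$ is then a union of complete eigenspaces, and the orthogonal projector $P_T^{(m)}$ onto $\mathrm{span}\{u_1^+(T),\dots,u_m^+(T)\}$ is well defined. Applying Proposition~\ref{prop:spectrum_stab} (eigenvalue stability) together with Proposition~\ref{prop:proj_stab} (projector stability, handled one distinct eigenvalue at a time) then yields, for $n$ large, both $\lambda^+_j(T_n)\to\lambda^+_j(T)$ for $j\le m$ and $P_{T_n}^{(m)}\to P_T^{(m)}$ in Hilbert--Schmidt norm, with ranges of equal dimension $m$. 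Exactly as in the proof of Theorem~\ref{theo:GW_stability}, from $\norm{(P_T^{(m)} - P_{T_n}^{(m)})u_i(T_n)}\to 0$ I would deduce that $\{u_i^+(T_n)\}_{i\le m}$ converges, along a subsequence, to an orthonormal basis of $\mathrm{range}(P_T^{(m)})$; since two orthonormal bases of the same space differ by an element of $O(m)$, there exist $Q^n\in O(m)$ with $\sum_{j}Q^n_{ij}u_j^+(T_n)\to u_i^+(T)$ in $L^2(X,\mu)$. Combining this with the convergence of the eigenvalues gives $\min_{Q\in O(m)}\norm{M^m - Q M^m_n}_{L^2(X,\mu;\R^m)}\to 0$, which is precisely the asserted $L^2$ convergence up to an orthogonal transformation.

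For the ``in particular'' statement I would pass from these finite-dimensional $L^2$ (hence in-measure) convergences to convergence in measure on $\R^\infty$ by means of the explicit product metric $d(x,y)=\sum_k 2^{-k}\frac{\abs{x_k - y_k}}{1+\abs{x_k - y_k}}$. Since $T$ is compact, its positive eigenvalues tend to $0$, so the strict inequality $\lambda^+_m > \lambda^+_{m+1}$ necessarily holds for arbitrarily large $m$, and the previous step therefore applies along a cofinal set of truncation levels; this controls every coordinate. Given $\varepsilon>0$, one picks such a level $N$ with $\sum_{k>N}2^{-k}<\varepsilon/2$ (each summand of $d$ is bounded by $1$, so the tail is uniformly below $\varepsilon/2$), while on the first $N$ coordinates the in-measure convergence forces the head of $d$ below $\varepsilon/2$ off a set of arbitrarily small measure, yielding $d(M_n,M)<\varepsilon$ there. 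The one point requiring care --- and the main obstacle --- is the bookkeeping of the orthogonal transformations $Q^n$: the convergence is a priori only up to $Q^n\in O(m)$ at each level, so the product-topology statement must be read up to this inherent orthogonal ambiguity of the MDS maps (in the generic case of simple eigenvalues the $Q^n$ reduce, as in the ``Moreover'' part of Theorem~\ref{theo:GW_stability}, to diagonal sign matrices $D_n$), and one must check that these blockwise transformations can be organized consistently so as not to destroy the coordinatewise in-measure convergence.
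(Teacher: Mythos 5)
Your proposal is correct and matches the paper's own argument, which is precisely to repeat the proof of Theorem~\ref{theo:GW_stability} with $L^2(X^\infty,\bm{\gamma})$ replaced by $L^2(X,\mu)$ and Lemma~\ref{lem:W_to_HS} in place of Lemma~\ref{lem:GW_to_HS}. Your elaboration of the ``in particular'' claim (truncation in the product metric, blockwise orthogonal transformations consistent across spectral gaps) fills in details the paper leaves implicit, but does not depart from its route.
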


Note that to obtain the strong convergence in $\ell^2$ we need to have some uniform (in $n$) bounds on the tails $\sum_{i = j}^\infty \lambda_i^+(T_n)$ and $\sum_{i = j}^\infty \lambda_i^+(T)$. It is not clear how can we get it, even if the operator $T$ is trace-class.

Another way to obtain a strong convergence is to modify slightly the definition of the infinite MDS as for instance proposed in the following statement. 

\begin{proposition}
	Suppose that $\diam(X) < \infty$, $p \ge 4$,
	and set 
	\begin{align*}
		\tilde{M}(x) &\eqdef \left(\sqrt{\lambda_j^+(T)} \frac{u_j^+(T)(x)}{\norm{u_j^+(T)}_{L^{p}(X,\mu)}}\right)_{j \ge 1},\\
	    \tilde{M}_n(x) &\eqdef \left(\sqrt{\lambda_j^+(T_n)} \frac{u_j^+(T_n)(x)}{\norm{u_j^+(T_n)}_{L^{p}(X,\mu)}}\right)_{j=1,\ldots, n},
	\end{align*}
i.e.\ the maps of the same form as $M$ and $M_n$ respectively, but with the eigenfunctions normalized in $L^{p}(X,\mu)$ instead of $L^2(X,\mu)$.  
	One has then $\tilde{M} \in L^{p}(X, \mu; \ell^{p})$. Moreover, there are orthogonal matrices $Q_n$ (with different dimensions) such that
	\begin{equation}\label{eq_4conv1d}
		\norm{\tilde{M} - Q_n \tilde{M}_n}_{L^{p}(X,\mu; \ell^p)} \to 0
	\end{equation}
	as $n \to \infty$. Here $Q_n x$ acts on the first $\dim Q_n$ components of $x \in \ell^p$, leaving all the other components unchanged.
\end{proposition}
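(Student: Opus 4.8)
The plan is to prove the two assertions separately. For the membership $\tilde M \in L^{p}(X, \mu; \ell^{p})$, the point is a clean identity: since each eigenfunction is normalized in $L^{p}$, Tonelli's theorem gives
\[
\int_X \norm{\tilde M(x)}_{\ell^p}^p\, d\mu(x) = \sum_{j} (\lambda_j^+(T))^{p/2}\, \frac{\int_X \abs{u_j^+(T)}^p\, d\mu}{\norm{u_j^+(T)}_{L^p(X,\mu)}^p} = \sum_j (\lambda_j^+(T))^{p/2},
\]
which is \emph{independent of the eigenfunctions}. Because $T$ is Hilbert--Schmidt one has $\sum_j (\lambda_j^+(T))^2 < \infty$, and since $p \ge 4$ (so $p/2 \ge 2$) and $\lambda_j^+(T) \to 0$, the series $\sum_j (\lambda_j^+(T))^{p/2}$ converges; strong measurability of $\tilde M$ as an $\ell^{p}$-valued map follows as in Proposition~\ref{prop_MDScorrect1}. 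The same computation applied to $\tilde M_n$ yields $\norm{\tilde M_n}_{L^p(X,\mu;\ell^p)}^p = \sum_{j} (\lambda_j^+(T_n))^{p/2}$.

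For~\eqref{eq_4conv1d} I fix $m$ with the spectral gap $\lambda_m^+(T) > \lambda_{m+1}^+(T)$, decompose $\tilde M = (\tilde M^m, \mathrm{tail})$, and take $Q_n$ to act on the first $m$ coordinates. By the triangle inequality it suffices to control, separately, the two tails $\sum_{j > m}$ of $\tilde M$ and of $\tilde M_n$, and the finite block $\norm{\tilde M^m - Q_n \tilde M_n^m}_{L^p(X,\mu;\ell^p)}$. The tails are handled \emph{uniformly in $n$} by the identity above, since $\int_X \norm{\mathrm{tail}}_{\ell^p}^p\,d\mu = \sum_{j>m}(\lambda_j^+)^{p/2}$. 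Indeed, Lemma~\ref{lem:W_to_HS} gives $\norm{T_n - T}_{HS}\to 0$, so by Proposition~\ref{prop:spectrum_stab} the positive eigenvalues converge in $\ell^2$; combined with $p/2\ge 2$ and the uniform bound $C\eqdef\sup_n\norm{T_n}_{HS}<\infty$ one has $(\lambda_j^+(T_n))^{p/2} \le C^{\,p/2-2}(\lambda_j^+(T_n))^2$, whence $\sup_n \sum_{j>m}(\lambda_j^+(T_n))^{p/2} \to 0$ as $m\to\infty$. This uniform tail bound is precisely what the $L^2$ normalization could not provide (cf.\ the remark following Theorem~\ref{theo:W_stability}), and is the whole reason for passing to the $L^{p}$ normalization.

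For the finite block I first upgrade the $L^2$ convergence of eigenfunctions to $L^{p}$. On each eigenvalue cluster inside the first $m$ coordinates the relevant eigenvalues stay bounded away from $0$, so by Lemma~\ref{lm_MDSLip} the corresponding eigenfunctions of $T$ and of $T_n$ are uniformly Lipschitz; since $\diam X < \infty$ and $\mu$ is a probability, an $L^2$-normalized $L$-Lipschitz function satisfies $\norm{u}_{L^\infty}\le 1 + \tfrac{4(\diam X)^2}{\abs{\lambda}}$, so these eigenfunctions are uniformly bounded in $L^\infty$. Using Proposition~\ref{prop:proj_stab}, the spectral projectors converge, $\norm{P_{T_n} - P_T}_{HS}\to 0$, and one may choose an orthonormal eigenbasis of $T_n$ on each cluster (e.g.\ by projecting the fixed eigenbasis of $T$ with $P_{T_n}$ and applying Gram--Schmidt) that converges to the eigenbasis of $T$ in $L^2$. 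Interpolating $\norm{g}_{L^p}\le \norm{g}_{L^2}^{2/p}\norm{g}_{L^\infty}^{1-2/p}$ against the uniform $L^\infty$ bound promotes this to convergence in $L^{p}$. Consequently $\norm{u_j^+(T_n)}_{L^p}\to \norm{u_j^+(T)}_{L^p}$ and $\sqrt{\lambda_j^+(T_n)}\to\sqrt{\lambda_j^+(T)}$, so each of the finitely many coordinates of $\tilde M_n^m$ converges to the corresponding coordinate of $\tilde M^m$ in $L^{p}(X,\mu)$.

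The delicate point, and the main obstacle, is the interaction between the orthogonal matrices $Q_n$ and the per-coordinate $L^{p}$-normalization: unlike the $\ell^2$ norm used in Theorem~\ref{theo:W_stability}, the $\ell^{p}$ norm is \emph{not} invariant under $O(m)$ when $p\neq 2$, so one cannot absorb an arbitrary within-cluster rotation of the $T_n$-eigenfunctions into a single $Q_n$ after normalizing each coordinate independently. This dictates the order of operations above: the within-cluster orthogonal ambiguity must be resolved at the level of the ($L^2$-orthonormal) eigenfunctions, by selecting the $T_n$-eigenbasis that converges to the chosen $T$-eigenbasis, \emph{before} normalizing in $L^{p}$; the residual matrix $Q_n$ then only reconciles the labelling and signs of coordinates, reducing to the diagonal $\pm1$ matrices of Theorem~\ref{theo:GW_stability} when the eigenvalues are simple. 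Combining the uniform tail estimate with the finite-block $L^{p}$ convergence and letting first $n\to\infty$ and then $m\to\infty$ along spectral gaps yields~\eqref{eq_4conv1d}.
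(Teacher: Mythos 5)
Your overall skeleton matches the paper's proof: the Tonelli identity $\int_X \norm{\tilde M}_{\ell^p}^p\,d\mu=\sum_j(\lambda_j^+(T))^{p/2}$ (and likewise for $\tilde M_n$), a splitting into a finite block plus tails, and an upgrade of the $L^2$-convergence of eigenfunctions to $L^p$-convergence via the uniform $L^\infty$ bounds coming from Lemma~\ref{lm_MDSLip}. Where you genuinely differ is the quantitative input for the tails: the paper controls $\norm{T}_{S_{p/2}}$ and $\norm{T_n-T}_{S_{p/2}}$ through Russo's Hausdorff--Young theorem for integral operators and deduces convergence of the tail sums $\sum_{j>m}(\lambda_j^+(T_n))^{p/2}$, whereas you use only the Hilbert--Schmidt convergence of Lemma~\ref{lem:W_to_HS}, Proposition~\ref{prop:spectrum_stab}, and the domination $(\lambda_j^+(T_n))^{p/2}\le C^{p/2-2}(\lambda_j^+(T_n))^2$. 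Modulo the routine point that Kato's enumeration must be reconciled with the decreasing ordering of the positive eigenvalues, this is a legitimate and more elementary route.

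There is, however, a genuine gap in your treatment of what you correctly identify as the delicate point. You propose to resolve the within-cluster ambiguity by ``selecting the $T_n$-eigenbasis that converges to the chosen $T$-eigenbasis'', e.g.\ by projecting the $T$-eigenbasis with $P_{T_n}$ and applying Gram--Schmidt. This is not available: on a cluster where $\lambda_k^+(T)$ has multiplicity $d\ge 2$, nothing prevents $T_n$ from having $d$ \emph{distinct} eigenvalues near $\lambda_k^+(T)$, in which case its eigenbasis is rigid up to signs and cannot be selected; and the Gram--Schmidt vectors you construct, while they do converge to the $u_i(T)$, are \emph{not} eigenfunctions of $T_n$ (the restriction of $T_n$ to the range of $P_{T_n}$ is not scalar). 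Since $\tilde M_n$ is by definition built from actual eigenfunctions of $T_n$, each divided by its \emph{own} $L^p$ norm, your construction produces a different map, not $Q_n\tilde M_n$. The obstruction is structural: if along a subsequence $u_j^+(T_n)\to v_j$ in $L^p$ with $v_j=\sum_k R_{jk}u_k^+(T)$ for some rotation $R$, then an orthogonal $Q$ with $\sum_j Q_{ij}\, v_j/\norm{v_j}_{L^p}=u_i^+(T)/\norm{u_i^+(T)}_{L^p}$ forces $Q=D_u^{-1}R^\T D_v$ with $D_u,D_v$ the diagonal matrices of $L^p$ norms, and $Q Q^\T=\mathrm{Id}$ iff $R^\T D_v^2 R=D_u^2$, which fails for generic $R$; compactness of $O(m)$ then shows the finite-block minimum need not tend to zero. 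So your argument, as written, is complete only when the eigenvalues $\lambda_1^+(T),\dots,\lambda_m^+(T)$ are simple, where each cluster is one-dimensional and $Q_n$ is diagonal $\pm 1$. In fairness, the paper's own proof elides exactly the same issue by asserting~\eqref{eq_4conv1c} ``in a similar way to Theorem~\ref{theo:W_stability}'': the $\ell^2$ argument there rests on rotation invariance of the Hilbert norm, which is precisely what is lost after per-coordinate $L^p$ normalization. You deserve credit for spotting the problem, but your repair does not close it.
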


\begin{remark}
	The condition on boundedness of $X$ (i.e.\ $\diam(X) < \infty$) is only taken to ensure that
	$\norm{u_j^+(T)}_{L^p(X,\mu)} <+\infty$ for all $j$. Of course it can be weakened to just an appropriate integrability of
	the kernel $\kap_T$ (or, equivalently, $\kap$) with respect to $\mu$.
\end{remark}

\begin{proof}
If $\diam(X) < \infty$, then by Lemma~\ref{lm_MDSLip} every $u_j^+(T)$ 
is bounded (by $C/\lambda_j$ for some $C>0$ depending only on $\diam X$)
so that $\norm{u_j^+(T)}_{L^{p}(X,\mu)} < +\infty$ and hence in view of Proposition~\ref{prop_MDScorrect1} the map $\tilde M$ is correctly defined.
We assume $u_j^+(T)$ to be already normalized in $L^p(X,\mu)$, so that
\[
\tilde{M}(x) \eqdef \left(\sqrt{\lambda_j^+(T)} u_j^+(T)(x)\right)_{j \ge 1}.  
\]
Denoting by $\langle \cdot,\cdot\rangle$ the duality pairing between $\ell^p$ and $\ell^{p'}$, 
for every $s \in (\ell^p)' = \ell^{p'}$, $p' = \frac{p}{p-1}$, 
one has
\[
\langle \tilde M(x), s\rangle = \sum_j s_j \sqrt{\lambda_j^+(T)} u_j^+(T)(x),
\] 
one has
\[
|\langle \tilde M(x), s\rangle| \leq \norm{s}_{\ell^{p'}} \sum_j (\lambda_j^+(T))^{p/2} |u_j^+(T)|^p(x) .
\]
Let us set $q \eqdef \frac{p}{2} \ge 2$ and denote by $\norm{A}_{S_q}$ the $q$-Schatten norm of a self-adjoint operator $A$, i.e.
\[
\norm{A}_{S_q} \eqdef \left(\sum_j |\lambda_j(A)|^{q}\right)^{1/q} .
\]
Then by  \cite[theorem 1]{russo1977hausdorff}, since $T$ is self-adjoint, one has
\begin{align*}
	\norm{T}_{S_q} &\le \left(\int_X \norm{\kap_T(\cdot, x)}_{L^{q'}(X, \mu)}^q d \mu(x)\right)^{1/q} \\
	&\le 4 \left(\int_X \norm{\kap(\cdot, x)}_{L^{q'}(X, \mu)}^q d \mu(x)\right)^{1/q}
	\le 2 \diam^2(X) .
\end{align*}
Hence, the series
\[
\sum_j (\lambda_j^+(T))^{p/2} |u_j^+(T)|^p(x)
\]
is convergent for $\mu$-a.e.\ $x\in X$, because
\[
\int_X \sum_j (\lambda_j^+(T))^{p/2} |u_j^+(T)|^p(x) = \sum_j (\lambda_j^+(T))^{p/2} < +\infty.
\]
Thus
\[
\langle \tilde M(x), s\rangle = \lim_m \sum_{j= 1}^m s_j \sqrt{\lambda_j^+(T)} u_j^+(T)(x)
\]
for $\mu$-a.e.\ $x\in X$, and hence the function $s\in (\ell^p)' \mapsto \langle \tilde M(x), s\rangle \in \R$ is measurable as a $\mu$-a.e.\ limit
of a sequence of continuous functions, implying that $\tilde M \colon X \to \ell^p$ is weakly (hence also strongly) measurable.
The equality
\[
\int_X \norm{\tilde M(x)}_{\ell^p}^p\, d\mu(x) = \sum_j (\lambda_j^+(T))^{p/2} \le \norm{T}_{S_{p/2}}^{p/2} 
<\infty
\]
shows that
$\tilde M \in L^p(X, \mu; \ell^p)$.

Using again \cite[theorem 1]{russo1977hausdorff}, the Lyapunov inequality (or, equivalently, just H\"{o}lder inequality), 
and the fact that $q' \le 2 \le q$ we obtain that
\begin{align*}
	\norm{T_n - T}_{S_q} &\le \left(\int_X \norm{\kap_T(\cdot, x) - \kap_{T_n}(\cdot, x)}_{L^{q'}(X, \mu)}^q d \mu(x)\right)^{1/q} \\
	&\le  \norm{\kap_T - \kap_{T_n}}_{L^q(X \times X, \mu \otimes \mu)} .
\end{align*}
Since 
\begin{align*}
	|\kap(x, y) - \kap_n(x, y)| &= \frac{\dist(x, y) + \dist(t_n(x), t_n(y))}{2} |\dist(x, y) - \dist(t_n(x), t_n(y))| \\
	&\le \diam(X) \left[\dist(x, t_n(x)) + \dist(y, t_n(y))\right] ,
\end{align*}
it is easy to see that 
\begin{align*}
	 \norm{\kap_T - \kap_{T_n}}_{L^q(X \times X, \mu \otimes \mu)}
	&\le 4 \norm{\kap - \kap_n}_{L^q(X \times X, \mu \otimes \mu)} \\
	&\le 8 \diam(X) \left( \int_X \dist(x, t_n(x))^q d \mu(x) \right)^{1/q} \\
	&\le 8 \diam(X)^{2 - 2 / q} W_2^{2 / q}(\mu, \mu_n) \to 0.
\end{align*}

Recall that if $v_n \in L^\infty(X, \mu)$ are uniformly bounded and $v_n \to v$ in $L^2(X, \mu)$, then they also converge in $L^p(X, \mu)$.
Thus in a similar way to Theorem~\ref{theo:W_stability}, using the uniform bounds on $u_j^+(T)$, we get that
\begin{equation}\label{eq_4conv1c}
	\lim_n \min_{Q \in O(m)} \norm{\tilde{M}^m - Q \tilde{M}^m_n}_{L^p(X,\mu; \ell^p)} = 0
\end{equation}
for every $m \in \N$ such that $\lambda^+_m(T) > \lambda^+_{m+1}(T)$.
Now note that 
\begin{equation}\label{eq_4conv1a}
	\norm{\tilde{M}^m - \tilde{M}}_{L^p(X,\mu; \ell^p)}^p 
	= \sum_{j = m+1}^\infty \left(\lambda_j^+(T)\right)^{p/2} \to 0 
\end{equation}
as $m \to \infty$, and extending a $Q\in O(m)$ to an operator (still denoted by $Q$) over $\ell^p$, we get
\begin{align*}\label{eq_4conv1b}
	\lim_{n \to \infty} \norm{Q\tilde{M}^m_n - Q\tilde{M}_n}_{L^p(X,\mu; \ell^p)}^p & =
	\lim_{n \to \infty} \norm{\tilde{M}^m_n - \tilde{M}_n}_{L^p(X,\mu; \ell^p)}^p \\
	&= \lim_{n \to \infty} \sum_{j = m+1}^\infty \left(\lambda_j^+(T_n)\right)^{p/2}\\
	&= \sum_{j = m+1}^\infty \left(\lambda_j^+(T)\right)^{p/2}
\end{align*}
by Proposition~\ref{prop:spectrum_stab}, since $\norm{T_n - T}_{S_{p/2}} \to 0$.
Hence,
\begin{equation}\label{eq_4conv1b1}
	\lim_{n \to \infty} \norm{Q \tilde{M}^m_n - Q \tilde{M}_n}_{L^p(X,\mu; \ell^p)} \to 0
\end{equation}
as $m \to \infty$.
Combining~\eqref{eq_4conv1c} with~\eqref{eq_4conv1a} and~\eqref{eq_4conv1b1}, we get~\eqref{eq_4conv1d},
concluding the proof.
\end{proof}

\section{MDS for sample spaces}\label{sec:mds-spheres}

In this section we show by means of several examples that MDS for manifolds equipped with intrinsic distances might produce a snowflake embedding of such manifolds in an infinite-dimensional Hilbert space, as, for instance, in the case of finite-dimensional spheres or flat tori.

\subsection{MDS embedding of spheres}\label{sec:sphere}
Consider the $d$-dimensional sphere $\mathbb{S}^d$ endowed with the inner metric $\dist(x, y) = \arccos(x\cdot y)$ and
a normalized surface measure $\mu \eqdef \frac{\sigma}{\sigma(\mathbb{S}^d)}$. 
Along with this space we consider its ``snowflake'', i.e.\ the same sphere but with the distance $\dist^{1/2}$ and the same
measure. We let $M$ and $M_{1/2}$ stand for the respective infinite MDS embeddings of $\mathbb{S}^d$ into $\ell^2$.
The following statement is valid.

\begin{proposition}\label{prop_MDSsphere1}
	One has
	\begin{equation}\label{eq_MDSsphere1}
		\norm{M(x) - M(y)}_2^2 = \pi \norm{M_{1/2}(x) - M_{1/2}(y)}_2^2 = \pi \dist(x, y).
	\end{equation}	
\end{proposition}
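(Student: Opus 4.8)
The plan is to compute the two operators explicitly and exploit the elementary identity $\arccos t = \pi/2 - \arcsin t$. Since $\mathbb{S}^d$ is homogeneous under the isometric, measure-preserving action of $O(d+1)$, Proposition~\ref{prop_MDShomogeneous} applies to both metric measure spaces at hand: the operators $K$ (with kernel $-\tfrac12\arccos^2(x\cdot y)$) and $\tilde K$ (with kernel $-\tfrac12\arccos(x\cdot y)$) have the constants as an eigenfunction, and $T$ (resp.\ $\tilde T$) shares with $K$ (resp.\ $\tilde K$) all eigenfunctions and eigenvalues except the one attached to the constants. Both kernels being zonal, their common eigenfunctions are spherical harmonics and their eigenvalues split according to the degree $\ell$. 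Writing $M$ from the positive spectrum of $T$ and $M_{1/2}$ from that of $\tilde T$, I would aim to show that the positive spectral part of $T$ coincides with $\pi$ times $\tilde T$.

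The core of the argument is the decomposition
\[
-\tfrac12\arccos^2(t) = -\tfrac{\pi^2}{8} + \tfrac{\pi}{2}\arcsin(t) - \tfrac12\arcsin^2(t),
\]
coming from $\arccos t = \pi/2-\arcsin t$. Projecting out the constants (applying $P$ on both sides), this yields $T = \pi\,\tilde T - \tfrac12 B$, where $A$ and $B$ denote the integral operators with zonal kernels $\arcsin(x\cdot y)$ and $\arcsin^2(x\cdot y)$ restricted to the orthogonal complement of constants, and $\tilde T = \tfrac12 A$. I would then invoke the two classical Taylor expansions
\[
\arcsin t = \sum_{k\ge 0} c_k\, t^{2k+1}, \qquad \arcsin^2 t = \tfrac12\sum_{n\ge 1}\frac{(2t)^{2n}}{n^2\binom{2n}{n}},
\]
both of which have \emph{nonnegative} coefficients. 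Since each kernel $(x\cdot y)^k = \langle x^{\otimes k}, y^{\otimes k}\rangle$ is positive definite on $\mathbb{S}^d$, a uniformly convergent series with nonnegative coefficients is positive semidefinite (this is the easy direction of Schoenberg's criterion), so $A\succeq 0$ and $B\succeq 0$ on $L^2(\mathbb{S}^d,\mu)$. Moreover, $\arcsin$ being odd and $\arcsin^2$ even in $t$, the range of $A$ lies in the span of odd-degree spherical harmonics and that of $B$ in the span of even-degree ones, whence $A$ and $B$ have orthogonal ranges and $AB=0$ on the mean-zero subspace.

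With these facts the conclusion is immediate. Since $\pi\tilde T=\tfrac{\pi}{2}A\succeq0$ and $\tfrac12 B\succeq0$ act on mutually orthogonal eigenspaces, the strictly positive part of $T=\pi\tilde T-\tfrac12 B$ is exactly $\pi\tilde T$, with the same eigenprojections as $\tilde T$ but eigenvalues scaled by $\pi$ (the negative part being carried by $-\tfrac12 B$ on the even-degree harmonics). As $\sum_j\lambda_j^+(T)\,(u_j^+(x)-u_j^+(y))^2$ depends only on these eigenprojections, and not on the particular choice of orthonormal eigenfunctions within each eigenspace, one gets $\norm{M(x)-M(y)}_2^2=\pi\norm{M_{1/2}(x)-M_{1/2}(y)}_2^2$ for all $x,y$. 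Finally, $\tilde T\succeq0$, and since $\mathbb{S}^d$ is compact with continuous kernel and trace-class $\tilde T$, the hypotheses of Theorem~\ref{theo:d2_decomp} and Remark~\ref{rm_positiveMDS} hold; Mercer's theorem then gives $\norm{M_{1/2}(x)-M_{1/2}(y)}_2^2=(\dist^{1/2}(x,y))^2=\dist(x,y)$ for all $x,y\in\mathbb{S}^d$. Combining the two identities yields~\eqref{eq_MDSsphere1}.

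The main obstacle is the sign information feeding the spectral splitting: one must establish that both zonal kernels $\arcsin(x\cdot y)$ and $\arcsin^2(x\cdot y)$ are positive semidefinite, which rests on the nonnegativity of the coefficients in the two Taylor series above — the expansion of $\arcsin^2$ being the less standard of the two — and one must verify that parity makes these two positive pieces spectrally orthogonal, so that no cancellation occurs and the positive part of $T$ is precisely $\pi\tilde T$ rather than some proper subspace thereof. The remaining verifications (continuity of the kernels, the trace-class property, and the applicability of Mercer's theorem and Theorem~\ref{theo:d2_decomp} on the compact homogeneous space $\mathbb{S}^d$) are routine.
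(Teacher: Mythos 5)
Your proof is correct, but it takes a genuinely different route to the key identity $T_+=\pi T_{1/2}$. The paper works at the level of explicit spectral data: it expands both kernels in powers of $x\cdot y$ with explicit coefficients $a_n,\,b_n$, invokes the eigenvalue formula of Azevedo--Menegatto for dot-product kernels (each spherical harmonic of degree $j$ is an eigenfunction, with eigenvalue a \emph{positively}-weighted sum of the same-parity coefficients $a_{2s+j}$), and then reads off the sign pattern from the reflection identity $\arccos^2(-x)=(\pi-\arccos x)^2$: for odd $j$ one gets $\lambda_j(K)=\pi\lambda_j(K_{1/2})>0$, for even $j\ge 2$ one gets $\lambda_j(K)<0$ and $\lambda_j(K_{1/2})=0$. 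You use the same reflection identity, in the equivalent form $\arccos t=\pi/2-\arcsin t$, but then argue at the operator level: $T=\pi\tilde T-\tfrac12 B$ with both pieces positive semidefinite by the easy direction of Schoenberg's criterion (nonnegative Taylor coefficients of $\arcsin$ and $\arcsin^2$ against the PSD monomial kernels $(x\cdot y)^n$), and with orthogonal ranges by parity, so the positive part of $T$ is exactly $\pi\tilde T$ with no cancellation. This bypasses the explicit eigenvalue formula entirely (your parity claim can even be checked without spherical harmonics: the antipodal change of variables $y\mapsto -y$ shows $A$ kills even functions and has odd range, and symmetrically for $B$), and it gets trace-class of $\tilde T$ from Mercer rather than from eigenvalue asymptotics. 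What the paper's computation buys in exchange is the explicit eigenvalues and their decay rate $\lambda_{2n+1}(K)=\Theta_d(n^{-d-1})$, which are reused elsewhere (the remark after Proposition~\ref{prop_MDScorrect1} and the appendix); your softer argument generalizes more readily to other zonal kernels whose reflection decomposition has the right signs and parities. The final step --- positivity of $T_{1/2}$ plus Theorem~\ref{theo:d2_decomp}, Remark~\ref{rm_positiveMDS} and Mercer to get the everywhere-defined isometry of the snowflake, and basis-independence of $\sum_j\lambda_j^+(u_j^+(x)-u_j^+(y))^2$ within eigenspaces for the first equality --- is the same in both proofs.
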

 
\begin{proof}
Define the kernel 
\[
\kap(x, y) = - \frac{1}{2} \dist^2(x, y) =- \frac{1}{2} \arccos^2(x\cdot y )
\]
over $\mathbb{S}^d\times \mathbb{S}^d$,
the corresponding integral operator $K$ on $L^2(X,\mu)$
\begin{equation}\label{eq:K_operator}
K[f](x) = \int_{\mathbb{S}^d} \kap(x, y) f(y) \,d \mu(y)
\end{equation}
and 
$T = P K P$,
where $P$ is the orthogonal projector onto the orthogonal complement of constant functions $\mathrm{span}^\bot(\one)$ in $L^2(\mathbb{S}^d,\mu)$ defined by the formula 
$$ P f := f - \one (\one, f)_{L^2(\mathbb{S}^d, \mu)}.$$
We justify below (see \eqref{eq:identity-a-b}) that the kernel $\kap$ enjoys the explicit series expansion
\begin{equation}\label{eq:a-series}
\begin{split}
\kap(x, y) = \sum_{n = 0}^\infty a_n ( x\cdot y)^n,\quad\text{where } 
a_n = \begin{cases}
- \frac{\pi^2}{8}, & n = 0, \\
\frac{\pi (2 j)!}{(2 j + 1) 2^{2 j + 1} (j!)^2}, & n = 2 j + 1, \\
- \frac{4^j (j!)^2}{2 (j + 1) (2 j + 1)}, & n = 2 j + 2.
\end{cases}
\end{split}
\end{equation}
Then by the theorem from~\cite[section~2]{azevedo2015eigenvalues} any spherical harmonic of degree $j$ is an eigenfunction of $K$ with eigenvalue 
\begin{equation}\label{eq:eigen-K}
\lambda_j(K) = \frac{\Gamma(d/2)}{2^{j+1}} \sum_{s = 0}^\infty a_{2 s + j} \frac{(2 s + j)!}{(2 s)!} \cdot \frac{\Gamma(s + 1 / 2)}{\Gamma(s + j + (d + 1) / 2)}.
\end{equation}
In particular, $\lambda_0(K)$ corresponds to a constant function, thus any other eigenfunction $u_j$ of $K$ has zero mean.
Since $\mathbb{S}^d$ is homogeneous under the action of the orthogonal group $O(d+1)$, and $\mu$ is invariant under this action, then by Proposition~\ref{prop_MDShomogeneous}
the operators $T$ and $K$ share the same set of eigenfunctions and eigenvalues except for $\lambda_0(T) = 0$.
We are interested only in positive eigenvalues, i.e.\ with odd indices. It is worth noting that according to~\eqref{eq:lambda_bound0} one has
\[
\sum_n \lambda_n^+(T)= \sum_n \lambda_n^+(K) <+\infty, 
\]
and hence by Proposition~\ref{prop_MDScorrect1} one has $M\in L^2(\mathbb{S}^d,\mu;\ell^2)$.

For the snowflake of $\mathbb{S}^{d}$ (endowed with the distance $\dist^{1/2}$) we define the kernel
\[
\kap_{1/2}(x, y) \eqdef - \frac{1}{2} \arccos( x \cdot y)
\]
over $\mathbb{S}^d\times \mathbb{S}^d$
and operators
\begin{equation}
K_{1/2}[f](x) \eqdef \int_{\mathbb{S}^d} \kap_{1/2}(x, y) f(y) \,d \mu(y), \quad T_{1/2} \eqdef P K_{1/2} P.
\end{equation}
in $L^2(X,\mu)$.
{ The Taylor series expansion of $\arccos$ function 
	gives
\begin{equation*}
\begin{aligned}
\kap_{1/2}(x, y) &= \sum_{n = 0}^\infty b_n (x\cdot y)^n, \quad\text{where }\\
b_n & = \begin{cases}
- \frac{\pi}{4}, & n = 0, \\
\frac{(2 j)!}{(2 j + 1) 2^{2 j + 1} (j!)^2}, & n = 2 j + 1, \\
0, & \text{otherwise}.
\end{cases}
\end{aligned}
\end{equation*}
The formula \eqref{eq:a-series} for $a_n$ can be 
either obtained from Taylor series expansion of the square of $\arccos$ function, or just
 from the above expression for $b_n$ through the simple calculation
\begin{align*}
\sum_{n=0}^\infty (-2a_n) (-1)^n x^n &= \arccos^2 (-x)=(\pi-\arccos x)^2 \\
&= \pi^2-2\pi \arccos x + \arccos^2 x \\
& =\pi^2-2\pi \sum_{n=0}^\infty (-2 b_n) x^n + \sum_{n=0}^\infty (-2a_n) x^n,
\end{align*} 
which implies
\begin{equation}\label{eq:identity-a-b}
\sum_{n=0}^\infty (-2a_n) ((-1)^n-1) x^n = \pi^2-2\pi \sum_{n=0}^\infty (-2b_n) x^n
\end{equation}
for all $x\in \R$. }
In particular, for $n\neq 0$ odd one has $a_n=\pi b_n$, while for $n\neq 0$ even one has $b_n=0$. 
In short,  for $n \ge 1$ we have $\pi b_n = (a_n)^+$.
Using again results from~\cite{azevedo2015eigenvalues} we immediately get that any spherical harmonic of degree $j$ is an eigenfunction of $K_{1/2}$ with the eigenvalue 
\[
\lambda_j(K_{1/2}) = \frac{\Gamma(d/2)}{2^{j+1}} \sum_{s = 0}^\infty b_{2 s + j} \frac{(2 s + j)!}{(2 s)!} \cdot \frac{\Gamma(s + 1 / 2)}{\Gamma(s + j + (d + 1) / 2)}.
\]
The latter clearly is positive for $j\neq 0$, if and only if $j$ is odd, in which case $\pi \lambda_j(K_{1/2}) = \lambda_j(K)$.
By the same reason as the operators $T$ and $K$, also the operators $T_{1/2}$ and $K_{1/2}$ share the same set of eigenfunctions and eigenvalues  except for $\lambda_0(T_{1/2}) = 0$.
Hence
\begin{equation}\label{eq:T_relation}
T_+ = \pi T_{1/2},
\end{equation}
where $T_+$ is the ``positive part'' of $T$, i.e.\
\[
T_+ v := \sum_{j} \lambda_j^+(T) (v, u_j^+(T))_{L^2(\mathbb{S}^d, \mu)}  u_j^+(T).
\]
Therefore, $T_{1/2}$ is positive semidefinite and trace-class, 
hence an MDS embedding of $(\mathbb{S}^{d}, \dist^{1/2})$ into $\ell^2$ is an isometric in view of Remark~\ref{rm_positiveMDS}. Hence the second equality of~\eqref{eq_MDSsphere1} holds. The first equality of~\eqref{eq_MDSsphere1} is true in view of~\eqref{eq:T_relation}.
\end{proof}

\subsection{MDS of product spaces}\label{sec:product}
Let $(X_i, \dist_i, \mu_i)$ be a metric space with probability measure $\mu_i$, $i = 1, 2$. 
Then define a metric on $X \eqdef X_1 \times X_2$ by
\[
\dist(x, y) := \sqrt{\dist_1^2(x_1, y_1) + \dist_2^2(x_2, y_2)}, \quad \text{where } x = (x_1, x_2),\; y = (y_1, y_2),
\]
and endow this space with the product measure $\mu := \mu_1 \otimes \mu_2$.

\begin{proposition}\label{prop_MDSprod1}
	Denote by $M^{(i)}\colon X^{(i)}\to \ell^2$, the infinite MDS map for each space  $(X_i, \dist_i, \mu_i)$, $i = 1, 2$. 
	Then the  infinite MDS map $M$ for the space $(X,\dist,\mu)$ can be represented as
	\begin{align*}
	M \colon& X \to \ell^2\times \ell^2, \\
	&x \mapsto M(x)=(M^{(1)}(x_1), M^{(2)}(x_2)),
	\end{align*}
	the space $\ell^2\times \ell^2$ being equipped with the natural Hilbert space structure,
	and
	\[
	\norm{M(x) - M(y)}_2^2 = \norm{M^{(1)}(x_1) - M^{(1)}(y_1)}_2^2 + \norm{M^{(2)}(x_2) - M^{(2)}(y_2)}_2^2.
	\]		
\end{proposition}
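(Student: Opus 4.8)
The plan is to exploit the Hilbert space identification $L^2(X,\mu) \cong L^2(X_1,\mu_1)\otimes L^2(X_2,\mu_2)$, valid because $\mu = \mu_1\otimes\mu_2$, and to reduce the spectral analysis of $T$ to that of the operators $T_1$, $T_2$ associated with the two factors. The key structural input is that the product metric makes the kernel additive: since $\dist^2(x,y) = \dist_1^2(x_1,y_1) + \dist_2^2(x_2,y_2)$, one has $\kap(x,y) = \kap_1(x_1,y_1) + \kap_2(x_2,y_2)$, and after applying the centering of~\eqref{eq_defT1} termwise (using that $\mu_1$, $\mu_2$ are probabilities) also $\kap_T(x,y) = \kap_{T_1}(x_1,y_1) + \kap_{T_2}(x_2,y_2)$. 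Writing $\Pi_i$ for the orthogonal projector onto constants in $L^2(X_i,\mu_i)$ and $P_i = \mathrm{Id}_i - \Pi_i$, a direct computation on elementary tensors $g\otimes h$ then yields
\[
T = T_1\otimes\Pi_2 + \Pi_1\otimes T_2,
\]
the mechanism being that an integral operator whose kernel depends only on the first variable averages out the second factor, leaving the projector $\Pi_2$ (and symmetrically). Here I would first note that $(X,\dist,\mu)$ inherits the finite fourth moment~\eqref{eq_d4mu1} from the factors, so that $T$ is indeed a well-defined self-adjoint Hilbert--Schmidt operator.

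The second step is to diagonalise $T$ using the orthogonal splitting
\[
L^2(X,\mu) = \bigl(P_1 L^2(X_1,\mu_1)\oplus\mathbb{R}\one_1\bigr)\otimes\bigl(P_2 L^2(X_2,\mu_2)\oplus\mathbb{R}\one_2\bigr)
\]
into four mutually orthogonal subspaces. Using $T_i\one_i = 0$, $\Pi_i\one_i = \one_i$ and $\Pi_i v = 0$ for $v\perp\one_i$, I would evaluate $T = T_1\otimes\Pi_2 + \Pi_1\otimes T_2$ on each piece and check that $T$ leaves each invariant, acting as $T_1$ (under $u\mapsto u\otimes\one_2$) on $P_1 L^2(X_1,\mu_1)\otimes\mathbb{R}\one_2$, as $T_2$ on $\mathbb{R}\one_1\otimes P_2 L^2(X_2,\mu_2)$, and as $0$ on the two remaining pieces. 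The crucial cancellation occurs on the mixed subspace $P_1 L^2(X_1,\mu_1)\otimes P_2 L^2(X_2,\mu_2)$: for $u\perp\one_1$ and $v\perp\one_2$,
\[
T(u\otimes v) = T_1 u\otimes\Pi_2 v + \Pi_1 u\otimes T_2 v = 0,
\]
since each summand annihilates one of the non-constant factors. Consequently $T$ is block diagonal, its nonzero spectrum is the union (with multiplicity) of the nonzero spectra of $T_1$ and $T_2$, and the corresponding eigenfunctions may be taken to be $u_j^+(T_1)\otimes\one_2$ and $\one_1\otimes u_j^+(T_2)$, which are orthonormal in $L^2(X,\mu)$ because each $\one_i$ is $L^2(\mu_i)$-normalised.

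Finally I would read off the embedding. As $\one_i\equiv 1$, one has $(u_j^+(T_1)\otimes\one_2)(x) = u_j^+(T_1)(x_1)$ and $(\one_1\otimes u_j^+(T_2))(x) = u_j^+(T_2)(x_2)$; hence collecting the positive eigenvalues of $T$ from the two blocks together with these eigenfunctions reproduces exactly the coordinates of $M^{(1)}(x_1)$ and of $M^{(2)}(x_2)$, and placing them in the two copies of $\ell^2$ gives $M(x) = (M^{(1)}(x_1), M^{(2)}(x_2))$; the asserted additive identity for $\norm{M(x)-M(y)}_2^2$ is then immediate. I do not expect a genuine obstacle in this last step: the computation is short once the tensor bookkeeping is in place, and the only point requiring care is that $M$ is determined only up to the choice and ordering of eigenfunctions (recall the remark following the definition~\eqref{eq:MDS} of $M$), so the representation is an existence statement among admissible MDS maps. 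One may bypass this ambiguity entirely by noting that $\norm{M(x)-M(y)}_2^2 = \sum_j\lambda_j^+(T)\,\bigl(u_j^+(T)(x)-u_j^+(T)(y)\bigr)^2$ is invariant under orthogonal changes of eigenbasis within eigenspaces and splits additively along the two blocks, which already yields the norm identity.
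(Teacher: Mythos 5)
Your proposal is correct and follows essentially the same route as the paper: both rest on the additive kernel decomposition $\kap_T(x,y)=\kap_T^{(1)}(x_1,y_1)+\kap_T^{(2)}(x_2,y_2)$ and the tensor basis $\{u_i\otimes v_j\}$, with the same cancellation on mixed tensors coming from the zero mean of eigenfunctions with nonzero eigenvalue. Your operator identity $T = T_1\otimes\Pi_2 + \Pi_1\otimes T_2$ and four-block splitting is just a repackaging of the paper's pointwise computation $T(u_i\otimes v_j)=\nu_i u_i\,(v_j,\one)+\eta_j v_j\,(u_i,\one)$, and your closing remark on the permutation/ordering ambiguity of the eigenbasis is a point the paper glosses over rather than a deviation from it.
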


\begin{proof}
	Let $\kap^{(i)}$, $\kap_T^{(i)}$, $K^{(i)}$, and $T^{(i)}$ denote the corresponding kernels and operators on $X_i$. Consider the kernel 
	\[\kap(x, y) \eqdef - \frac{1}{2} \dist^2(x, y) = \kap^{(1)}(x_1, y_1) + \kap^{(2)}(x_2, y_2)\] 
	over $X$ and the respective operators $K$ and $T$. 
Note that
\begin{align*}
\int_X \kap(x, y') \,d \mu(y') &= \int_{X_1 \times X_2} \left(\kap^{(1)}(x_1, y_1') + \kap^{(1)}(x_2, y_2')\right) \,d \mu_1(y_1') \otimes \mu_2(y_2') \\
&= \int_{X_1} \kap^{(1)}(x_1, y_1') \,d \mu_1(y_1') + \int_{X_2} \kap^{(2)}(x_2, y_2') \,d \mu_2(y_2'),
\end{align*}
thus the kernel of $T$
\begin{align*}
\kap_T(x, y) &= \kap(x, y) - \int_X \kap(x, y') \,d \mu(y') - \int_X \kap(x', y) \,d \mu(x') \\
&\qquad + \int_X \int_X \kap(x', y') \,d \mu(x') \,d \mu(y') \\
&= \kap_T^{(1)}(x_1, y_1) + \kap_T^{(2)}(x_2, y_2).
\end{align*}

To study the eigenfunctions of $T$ recall that $\{u_i \otimes v_j\}_{i, j}$ is an orthonormal basis in $L^2(X, \mu)$, where $u_i$ is an eigenfunction of $T^{(1)}$ with eigenvalue $\nu_i$, $v_j$ is an eigenfunction of $T^{(2)}$ with eigenvalue $\eta_j$, and $u_i \otimes v_j \eqdef u_i(x_1) v_j(x_2)$. Then
\begin{align*}
\left(T u_i \otimes v_j\right)(x) &= \int_{X} \left(\kap_T^{(1)}(x_1, y_1) + \kap_T^{(2)}(x_2, y_2)\right) u_i(y_1) v_j(y_2) \,d \mu_1(y_1) \otimes \mu_2(y_2) \\
&= \int_{X_1} \kap_T^{(1)}(x_1, y_1) u_i(y_1) \,d \mu_1(y_1) \int_{X_2} v_j(y_2) \,d \mu_2(y_2) \\
&~~~+ \int_{X_1} u_i(y_1) \,d \mu_1(y_1) \int_{X_2} \kap_T^{(2)}(x_2, y_2) v_j(y_2) \,d \mu_2(y_2) \\
&= \bigl(T^{(1)} u_i\bigr)(x_1) \cdot ( v_j, \one)_{L^2(X_2,\mu_2)} + \bigl(T^{(2)} v_j\bigr)(x_2) \cdot  (u_i, \one)_{L^2(X_1,\mu_1)}   \\
&= \nu_i u_i(x_1) (v_j, \one)_{L^2(X_2,\mu_2)}   + \eta_j v_j(x_2) (u_i, \one)_{L^2(X_1,\mu_1)}  .
\end{align*}
Note that constant functions belong to $\ker T^{(1)}$ and $\ker T^{(2)}$, thus other eigenfunctions have zero mean. Hence, $T (u_i \otimes v_j) \neq 0$ only if $v_j = \one$, $\nu_i \neq 0$, or $u_i = \one$, $\eta_j \neq 0$. In the former case we have
\[
T (u_i \otimes v_j) = \nu_i u_i \otimes \one = \nu_i (u_i \otimes v_j),
\]
and in the latter case
\[
T (u_i \otimes v_j) = \eta_j \one \otimes v_j = \eta_j (u_i \otimes v_j).
\]
Therefore, $\{u_i \otimes v_j\}_{i, j}$ is a basis of eigenfunctions of $T$, with only eigenfunctions outside $\ker T$ having the form $\{u_i \otimes \one\}_i$ or $\{\one \otimes v_j\}_j$. We immediately obtain that one can represent the MDS map $M$ of $(X, \dist, \mu)$ as claimed.
\end{proof}

An immediate corollary for the case of the flat torus $\mathbb{S}^1\times \mathbb{S}^1$ is as follows.

\begin{proposition}\label{prop_MDStor1}
The infinite MDS map $M\colon \mathbb{S}^1\times \mathbb{S}^1\to \ell^2$ for the flat torus $\mathbb{S}^1\times \mathbb{S}^1$ equipped with the locally Euclidean (flat) metric and the 
volume measure satisfies
	\[
	\norm{M((x_1, x_2)) - M((y_1, y_2))}_2^2 = \pi\left(\dist_{\mathbb{S}^1}(x_1, y_1)+ \dist_{\mathbb{S}^1}(x_2, y_2)
	\right),
	\]		
	where $\dist_{\mathbb{S}^1}$ stands for the spherical distance (i.e.\ the geodesic distance),
	i.e.\ provides a bi-H\"{o}lder embedding of the flat torus in a Hilbert space.
\end{proposition}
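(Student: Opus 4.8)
The plan is to read this off as a direct specialization of the product-space decomposition in Proposition~\ref{prop_MDSprod1} combined with the circle case of Proposition~\ref{prop_MDSsphere1}. First I would observe that the flat torus $\mathbb{S}^1\times\mathbb{S}^1$ with its locally Euclidean metric is precisely the product space of Section~\ref{sec:product} with $X_1 = X_2 = \mathbb{S}^1$, each endowed with the geodesic distance $\dist_{\mathbb{S}^1}$ and its uniform measure: indeed the flat metric satisfies $\dist^2((x_1,x_2),(y_1,y_2)) = \dist_{\mathbb{S}^1}^2(x_1,y_1) + \dist_{\mathbb{S}^1}^2(x_2,y_2)$, which is exactly the hypothesis of Proposition~\ref{prop_MDSprod1}. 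Hence that proposition applies and yields $M(x) = (M^{(1)}(x_1), M^{(2)}(x_2))$ together with the additive identity
\[
\norm{M(x) - M(y)}_2^2 = \norm{M^{(1)}(x_1) - M^{(1)}(y_1)}_2^2 + \norm{M^{(2)}(x_2) - M^{(2)}(y_2)}_2^2,
\]
where $M^{(i)}$ is the infinite MDS map of the $i$-th circle factor.

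Next I would invoke Proposition~\ref{prop_MDSsphere1} in the case $d = 1$, which gives for each factor $\norm{M^{(i)}(x_i) - M^{(i)}(y_i)}_2^2 = \pi\,\dist_{\mathbb{S}^1}(x_i, y_i)$. Substituting the two factor identities into the additive identity above immediately produces the claimed formula $\norm{M(x) - M(y)}_2^2 = \pi(\dist_{\mathbb{S}^1}(x_1, y_1) + \dist_{\mathbb{S}^1}(x_2, y_2))$. The point deserving attention here --- and what I expect to be the only genuine subtlety --- is that both ingredients must hold as \emph{exact pointwise} identities for all pairs, not merely $\mu\otimes\mu$-a.e.: the product decomposition of Proposition~\ref{prop_MDSprod1} is already an exact identity of maps, while the circle identity of Proposition~\ref{prop_MDSsphere1} holds for every pair because the associated half-distance operator $T_{1/2}$ is positive semidefinite and trace-class, so that by Mercer's theorem (Remark~\ref{rm_positiveMDS}) the corresponding embedding is isometric on all of $\supp\mu = \mathbb{S}^1$.

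Finally, for the bi-H\"{o}lder conclusion I would set $a \eqdef \dist_{\mathbb{S}^1}(x_1,y_1)$ and $b \eqdef \dist_{\mathbb{S}^1}(x_2,y_2)$, so that $\norm{M(x) - M(y)}_2^2 = \pi(a + b)$ while the flat torus distance is $\dist = \sqrt{a^2 + b^2}$. The elementary comparison $\sqrt{a^2+b^2} \le a + b \le \sqrt{2}\,\sqrt{a^2+b^2}$ between the $\ell^2$ and $\ell^1$ norms of $(a,b)$ then yields $\pi\,\dist \le \norm{M(x) - M(y)}_2^2 \le \sqrt{2}\,\pi\,\dist$, i.e.
\[
\sqrt{\pi}\,\dist^{1/2} \le \norm{M(x) - M(y)}_2 \le 2^{1/4}\sqrt{\pi}\,\dist^{1/2}
\]
for all pairs of points, which is precisely the assertion that $M$ is a bi-H\"{o}lder (indeed $1/2$-snowflake) embedding of the flat torus into $\ell^2$. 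No serious calculation is required beyond these substitutions; the real work has already been carried out in the two cited propositions, so the statement is genuinely an immediate corollary.
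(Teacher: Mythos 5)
Your proposal is correct and follows exactly the paper's route: the paper's own proof is the one-line combination of Proposition~\ref{prop_MDSprod1} with Proposition~\ref{prop_MDSsphere1} for $d=1$, which is precisely your first two steps. Your additional explicit comparison $\sqrt{a^2+b^2}\le a+b\le\sqrt{2}\sqrt{a^2+b^2}$ justifying the bi-H\"older claim is a correct elaboration of what the paper leaves implicit.
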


\begin{proof}
	Combine Proposition~\ref{prop_MDSprod1} with Proposition~\ref{prop_MDSsphere1} (with $d \eqdef 1$).
\end{proof}

Of course, a similar result can be easily obtained for a flat torus of any dimension.

\appendix

\section{Estimates of eigenvalues on a sphere}
We show that for the positive eigenvalues of the operator $K$ defined by~\eqref{eq:K_operator}, i.e. those with odd indices, one has 
\begin{equation}\label{eq:lambda_bound0}
\lambda_{2 n + 1}(K)= \Theta_d\left(n^{- d - 1}\right),
\end{equation}
where we use Knuth's big Theta notation: $f(\cdot) = \Theta(g(\cdot))$ means that $f(\cdot) = O(g(\cdot))$ and $g(\cdot) = O(f(\cdot))$ hold simultaneously. Here and below the subscript $d$ means that we consider $d$ fixed, so the constant in $O(\cdot)$ can depend on it, and all the asymptotic notation, unless otherwise stated, refers to $n\to \infty$.
To this aim note that by \eqref{eq:eigen-K} one has
\begin{equation}\label{eq:positive_lambda}
\lambda_n^+(K) 
= \lambda_{2 n + 1}(K) 
= \Gamma(d/2) \sum_{s = 0}^\infty \theta_n(s),
\end{equation}
where
\begin{align*}
\theta_{n}(s)
& = \frac{1}{2^{2 n + 2}} a_{2 s + 2 n + 1} \frac{(2 s + 2 n + 1)!}{(2 s)!} \cdot \frac{\Gamma(s + 1 / 2)}{\Gamma(s + 2 n + (d + 3) / 2)} \\
& = \frac{1}{2^{2 n + 2}} \cdot \frac{\pi (2 (s + n))!}{(2 (s + n) + 1) 2^{2 (s + n) + 1} ((s + n)!)^2} \cdot \frac{(2 (s + n) + 1)!}{(2 s)!} \cdot \\
&~~~\cdot \frac{\Gamma(s + 1 / 2)}{\Gamma(s + 2 n + (d + 3) / 2)} 
\quad 
\qquad  \text{(writing $a_{{2s+2n+1}}$ from \eqref{eq:a-series})} \\
& = 2^{2 s - 3} \left(\frac{(2 (s + n))! \sqrt{\pi}}{2^{2 (s + n)} (s + n)!}\right)^2 \frac{1}{(2 s)!} \cdot \frac{(2 s)! \sqrt{\pi}}{2^{2 s} s!} \cdot \frac{1}{\Gamma(s + 2 n + (d + 3) / 2)} \\
& = \frac{\sqrt{\pi}}{8} \frac{\Gamma^2(s + n + 1 / 2)}{\Gamma(s + 2 n + (d + 3) / 2) s!}.
\end{align*}
Therefore,
\begin{align*}
\alpha_n(s) &\eqdef \frac{\theta_{n}(s+1)}{\theta_{n}(s)} 
= \frac{(s + n + 1 / 2)^2}{(s + 1) (s + 2 n + (d + 3) / 2)} \\
&= 1 - \frac{2 (d + 3) (s + 1) - (2 n - 1)^2}{2 (s + 1) (2 s + 4 n + d + 3)} \\
&= 1 - \frac{1}{2 s + 4 n + d + 3}\left((d + 3) - \frac{(2 n - 1)^2}{2 (s + 1)}\right).
\end{align*}
From the above relationship we obtain that
for
\[
s^*:=\frac{(2 n - 1)^2}{2 (d + 3)} - 1 
\]
one has that $\alpha_n(s^*)=1$, $\alpha_n(s)\leq 1$ when $s\geq s^*$, and $\alpha_n(s)\geq 1$ when $s\leq s^*$.
Thus we get
\begin{equation}\label{eq:s_n}
s_n \eqdef \mathrm{argmax}_{s \in \N} \theta_n(s)=\left\lceil s^*\right\rceil = \left\lceil\frac{(2 n - 1)^2}{2 (d + 3)} - 1\right\rceil = \Theta_d(n^2).
\end{equation}
Note that
\begin{equation}\label{eq:suggested}
\sum_{s = s_n}^\infty \theta_n(s) \le \sum_{s = 0}^\infty \theta_n(s) \le s_n \theta_n(s_n) + \sum_{s = s_n}^\infty \theta_n(s).
\end{equation}

Stirling's formula yields 
\[
\Gamma(x) = \Theta\left(\frac{1}{\sqrt{x}} \left(\frac{x}{e}\right)^x\right),
\]
for large $x>0$, and 
thus
\begin{align*}
\theta_{n}(s) 
& = \Theta\left(\frac{\left(\frac{s + n - 1 / 2}{e}\right)^{2 (s + n)}}{\left(\frac{s + 2 n + (d + 1) / 2}{e}\right)^{s + 2 n + d / 2 + 1} \left(\frac{s}{e}\right)^{s + 1 / 2}}\right) \\
& = \Theta\left(\frac{\left(s + n - 1 / 2\right)^{2 s + 2 n}}{\left(s + 2 n + (d + 1) / 2\right)^{s + 2 n + d / 2 + 1} s^{s + 1 / 2}}\right) \\
& = \Theta\left(\left(\frac{e}{s}\right)^{(d + 3) / 2} \left(1 + \frac{n - 1 / 2}{s}\right)^{2 s + 2 n} \left(1 + \frac{2 n + (d + 1) / 2}{s}\right)^{- (s + 2 n + d / 2 + 1)}\right).
\end{align*}
Since $\ln(1 + x) = x - O(x^2)$ as $x\to 0$, we obtain that 
\[
(2 s + 2 n) \ln\left(1 + \frac{n - 1 / 2}{s}\right) = 2 n - 1 + O\left(\frac{n^2}{s} + \frac{n^3}{s^2}\right),
\]
as well as
\[
\left(s + 2 n + \frac{d}{2} + 1\right) \ln\left(1 + \frac{2 n + (d + 1) / 2}{s}\right)
= 2 n + \frac{d + 1}{2} + O\left(\frac{(n + d)^2}{s} + \frac{(n + d)^3}{s^2}\right).
\]
Hence for any $s \ge s_n = \Theta_d(n^2)$ one has
\[
\ln \theta_n(s) = - \frac{d + 3}{2} (\ln s - 1) - 1 - \frac{d + 1}{2} + O_d\left(\frac{n^2}{s}\right) = - \frac{d + 3}{2} \ln s + O_d(1),
\]
and thus $\theta_{n}(s) = \Theta_d\left(s^{- (d + 3) / 2}\right)$.
Therefore,
\[
\sum_{s = s_n}^\infty \theta_n(s) = \Theta_d\left(\sum_{s = s_n}^\infty s^{- (d + 3) / 2}\right)
= \Theta_d\left(s_n^{- (d + 1) / 2}\right).
\]
Moreover, 
\[
s_n \theta_n(s_n) = \Theta_d\left(s_n^{- (d + 1) / 2}\right).
\]
Finally, substituting the latter estimate into~\eqref{eq:suggested}, from~\eqref{eq:positive_lambda} we conclude that
\begin{equation}\label{eq:lambda_bound}
\lambda_{2 n + 1}(K) = \Theta_d\left(\sum_{s = 0}^\infty \theta_n(s)\right)
= \Theta_d\left(s_n^{- (d + 1) / 2}\right) = \Theta_d\left(n^{- d - 1}\right),
\end{equation}
where in the latter equility we used again $s_n=\Theta_d(n^2)$. This is in fact~\eqref{eq:lambda_bound0} as claimed.
\bibliographystyle{plain}

\end{document}